\long\global\def\C#1\F{{}}

\C
Submitted to arxiv on October 6??, 2017
\F

%%%%%%%%%%%%%%%%%%%%%%%%%%%%%%%%%%%%%%%%%%%%%%%%%%%%%%%%%%%%%%%%%%%%%%
\documentclass[authoryear,preprint,12pt]{elsarticle} 
%\documentclass[authoryear,preprint,12pt,review]{elsarticle}
%% Use the option review to obtain double line spacing

\usepackage[usenames,dvipsnames]{color}
\usepackage{amsmath, amsthm, amssymb, enumerate}
\biboptions{longnamesfirst,comma} %elsevier
%\journal{} %elsevier ??

\usepackage[normalem]{ulem}

\usepackage[displaymath, mathlines]{lineno} %elsevier
% \linenumbers
    % fonctionne, mais pas bien avec les displaymaths 
    % (j'ai la version 4.41)
\listfiles

%, pdfsync  
% !!!! be aware of incompatibilites !!!!
\usepackage{tikz}
\usetikzlibrary{decorations,decorations.pathmorphing,decorations.pathreplacing}
\usetikzlibrary{arrows,backgrounds,calc}
\usetikzlibrary{shapes.symbols,shapes.geometric}

\usetikzlibrary{decorations.markings}
 \tikzset{->-/.style={decoration={
  markings,
  mark=at position #1 with {\arrow{triangle 45}}}, 
  postaction={decorate}}}

%%%%%%%%%%%%%%%%%%%%%%%%%%%%%%%%%%%%%%%%%%%%%%%%%%%%%%%%%%%%%%%%%%%%%%
\usepackage{enumerate}

\def\st{{\;\vrule height9pt width0.9pt depth2.5pt\;}}

% Macros defining notations used in the text:

\newcommand{\thr}{\tau} % level
\newcommand{\src}{s}    % source node
\newcommand{\snk}{t}    % sink node

\newcommand{\specialPrm}[2]{P_{\mathrm{#1}}^{#2}}
\newcommand{\Plo}[1]{\specialPrm{LO}{#1}} % linear order polytope

 % linear order polytope

\newcommand{\argmax}{\operatornamewithlimits{argmax}}
\newcommand{\Pwo}[1]{\specialPrm{WO}{#1}} % weak order polytope
\newcommand{\Pso}[1]{\specialPrm{SO}{#1}} % semiorder polytope
\newcommand{\Pio}[1]{\specialPrm{IO}{#1}} % interval order polytope
 % complete preorder polytope
\newcommand{\Pcor}[1]{\specialPrm{COR}{#1}} % correlation polytope

\newcommand{\specialDrm}[2]{D_{\mathrm{#1}}^{#2}}
\newcommand{\Dlo}[1]{\specialDrm{LO}{#1}} % linear order network
\newcommand{\Dwo}[1]{\specialDrm{WO}{#1}} % weak order network
\newcommand{\Dso}[1]{\specialDrm{SO}{#1}} % semiorder network
\newcommand{\Dio}[1]{\specialDrm{IO}{#1}} % interval order network

\newcommand{\conv}{\mathop{\mathrm{conv}}}

%%%%%%%%%%%%%%%%%%%%%%%%%%%%%%%%%%%%%%%%%%%%%%%%%%
\newtheorem{proposition}{Proposition}

\newtheorem{theorem}{Theorem}
\newtheorem{lemma}{Lemma}

\newdefinition{defn}{Definition}
\newdefinition{exam}{Example}
\newdefinition{remark}{Remark}

\newproof{proo}{Proof}
\newproof{prof}{Proof of Theorem}

\hyphenation{parame-tri-za-tion parame-tri-zed}

%\date{October ??, 2017}

%%%%%%%%%%%%%%%%%%%%%%%%%%%%%%%%%%%%%%%%%%%%%%%%%%%%%%%%%%%%%%%%%%%%%%
\begin{document}
%\linenumbers

%-----------------------------------------------
\begin{frontmatter}
%\date{left in automatic mode}

\title{Extended Formulations for Order Polytopes \\ through Network Flows}

%\tnotetext[label1]{??}
\author{Clintin P. Davis-Stober}
%\footnote{??}
\ead{cstober2@gmail.com}
\address{University of Missouri, Columbia.}

%\tnotetext[label1]{??}
\author{Jean-Paul Doignon\footnote{Corresponding author: Jean-Paul Doignon}}
%\footnote{Tel.:+32\,2\,650.58.63; Fax.:+32\,2\,650.58.67}}
\ead{doignon@ulb.ac.be}
\address{Universit\'e Libre de Bruxelles, Belgium}
%, D\'epartement de Math\'ematique c.p.~216,\\B-1050 Bruxelles, Belgium.}

%\tnotetext[label1]{??}
\author{Samuel Fiorini}
%\footnote{???}
\ead{sfiorini@ulb.ac.be}
\address{Universit\'e Libre de Bruxelles, Belgium.}

%\tnotetext[label1]{??}
\author{Fran\c{c}ois Glineur}
%\footnote{Tel.:+32\,2\,650.58.63; Fax.:+32\,2\,650.58.67}}
\ead{glineur@core.ucl.ac.be}
\address{Universit\'e Catholique de Louvain, Belgium.}

%\tnotetext[label1]{??}
\author{Michel Regenwetter}
%\footnote{???}
\ead{regenwet@uiuc.edu}
\address{University of Illinois at Urbana-Champaign. \vspace{-10mm}}

\begin{linenomath*}
\begin{abstract} 
Mathematical psychology has a long tradition of modeling probabilistic choice via distribution-free random utility models and associated random preference models.  
For such models, the predicted choice probabilities often form a bounded and convex polyhedral set, or polytope.  Polyhedral combinatorics have thus played a key role in studying the mathematical structure of these models.  
However, standard methods for characterizing the polytopes of such models are subject to a combinatorial explosion in complexity as the number of choice alternatives increases.  
Specifically, this is the case for random preference models based on linear, weak, semi- and interval orders.
For these, a complete, linear description of the polytope   
is currently known only for, at most, 5--8 choice alternatives. 
We leverage the method of extended formulations to break through those boundaries. For each of the four types of preferences, we build an  
appropriate network, and show that the associated network flow polytope provides an extended formulation of the polytope of the 
choice model.  This extended formulation has a simple linear description that is more parsimonious than descriptions obtained by standard methods for large numbers of choice alternatives. 
The result is a computationally less demanding way of testing the probabilistic choice model on data.  
We  sketch how the latter interfaces with recent developments in contemporary statistics.
\end{abstract}
\end{linenomath*}

\begin{keyword}
%% keywords here, in the form: keyword \sep keyword
Order Polytopes \sep  Extended Formulations \sep  Network Flows  \sep  Probabilistic Choice  \sep Distribution Free Random Utility
%% MSC codes here, in the form: \MSC code \sep code
%% or \MSC[2008] code \sep code (2000 is the default)
% chosen by jpd:
\MSC[2010]  06A07 \sep 52B12 \sep 91E99
% 06A07 Combinatorics of partially ordered sets
% 52B12 Special polytopes (linear programming, centrally symmetric, etc.)
% 91E10 Cognitive psychology
% 91E99 None of the above, but in this section
\end{keyword}
\end{frontmatter}

%\maketitle %%%%%%%%%%%%%%%%%%%%

%%%\tableofcontents

%%%%%%%%%%%%%%%%%%%%%%%%%%%%%%%%%%%%%%%%%%%%%%%%%%%%%%%%%%%%%%%%%%%%%%
\section{Introduction}\label{sec: Intro} %%1.

Much of the literature on choice behavior in the social sciences centers on modeling the unobservable hypothetical preferences or cognitive processes underlying observable choice behavior. This endeavor encounters a huge hurdle in empirical applications: How can models accommodate heterogeneity in choice behavior across individuals as well as within any given individual?  To this end, researchers have employed a variety of probabilistic modeling approaches.  Prominent examples include computational, stochastic process models that mimic hypothetical cognitive processes, such as the well-known diffusion model \citep[e.g.,][]{ratcliff2004comparison, ratcliff1998modeling} and the linear ballistic accumulator model \citep{brown2008simplest,trueblood2014multiattribute}.  Other approaches include  multinomial processing tree models \citep[see, e.g.,][for a review]{erdfelder2009multinomial}. 
Rather than model latent cognitive processes, we concentrate on the more abstract notion of latent preferences.  
In so doing, we consider models that use a minimum of 
mathematical assumptions and that delineate large classes of theories of choice behavior. 
This approach is grounded in a long tradition of axiomatization and axiom testing in mathematical psychology \citep[see, e.g.,][]{luce2000utility} and naturally interfaces with  contemporary statistical methods \citep{davis2011shift,davis2015individual}.  Evaluating choice data against such models allows for strong inferences regarding the latent preferences that give rise to choice behavior.
Here, we tackle challenges with the mathematical characterization of such models.

Specifically, we consider classes of choice theories based upon four types of transitive binary preference relations: linear orders (rankings without ties), weak orders (rankings with or without ties), semiorders (partial rankings up to a constant threshold of discrimination), and interval orders (partial rankings up to a stimulus-dependent threshold of discrimination). For each of these algebraic representations of preferences, we consider probabilistic choice induced by an (unspecified) probability distribution over permissible preference states, i.e., over relations of the selected type. These models are also known in the literature as  \textsl{random preference models}, \textsl{mixture models}, or \textsl{random relation models}. 
Because each of these models has been shown to have a random utility representation, our results also directly
apply to characterizing various kinds of distribution-free random utility models, as well as random function models \citep[see, e.g.,][]{Regenwetter_Marley_2001}. 

Among the most important applications of these models are studies aimed at testing whether decision makers have transitive preferences (e.g., because they employ compensatory decision strategies by which they coherently trade-off between competing decision attributes) or whether they violate transitivity (say, because they employ simple heuristics that cause `incoherent' preferences). It is well-known \citep[][]{Roberts_1979} that, for finitely many choice options, complete asymmetric binary preferences are transitive if and only if they are strict linear orders. Dropping the requirement of complete preference,   asymmetric 
binary preferences on a finite set of choice options are negatively transitive (hence also transitive) if and only if they are strict weak orders \citep[][]{Roberts_1979}.  Testing random preference models on laboratory data faces two main hurdles: i)~Because the model ranges
of these models form convex polytopes 
(that is, bounded convex polyhedral sets as in \citealt{Ziegler98}),
they require order-constrained statistical methods; ii)~The order constraints 
(geometrically, the linear inequalities defining facets of the polytope) are  fully known only when the number of choice alternatives is small.  Recent developments  have essentially solved the first challenge \citep[see][]{Davis_Stober_2009,karabatsos2006bayesian, klugkist2007bayes,Myung_Karabatsos_Iverson_2005, Silvapulle_Sen_2005}. 

Using some of these approaches, \cite{Regenwetter_Dana_Stober_2011a} provided the first empirical and  statistically adequate test of the ``linear ordering model''  in the literature. 
Following \cite{Tversky_1969}, they used five choice alternatives per stimulus set.  \cite{Regenwetter_Stober_2012}, who reported the first empirical test of the ``weak order model,'' 
also used  five choice alternatives per stimulus set, in this case because the mathematical structure of the weak order polytope is not yet fully understood for larger numbers of choice alternatives. 
\cite{Regenwetter_Stober_2011} used four choice alternatives to test the ``semiorder model'' and the ``interval order model'' because they did not know a full description of the corresponding polytopes for more than four options. 

In this article, we leverage extended formulations. 
We study network flow polytopes to  characterize  random preference models of linear, weak, semi-, and interval orders in novel ways.  
This approach generates a simpler, more parsimonious description of the corresponding random preference model and
 allows empirical researchers to study these models for  larger numbers of choice alternatives than previously possible.  

The paper proceeds as follows. First, we review key facts about linear, weak, semi-, and interval orders. Then, we discuss the convex polytopes associated with the 
random preference models based on each of these four types of preference relations and the obstacles faced by traditional mathematical methods for characterizing their model ranges.
The following two sections overcome these obstacles and investigate extended formulations, especially those based on network flows, and associated 
network flow polytopes. We then discuss how these network flow polytopes interface with some contemporary developments in statistics.
We end with a summary and detail future directions of this work.

%%%%%%%%%%%%%%%%%%%%%%%%%%%%%%%%%%%%%%%%%%%%%%%%%%%%%%%%%%%%%%%%%%%%%%
\section{Models of Pairwise Preferences and their Numerical Representations}
\label{sec: Ordinal} %s2

Throughout the paper, $S$ denotes a finite set of $n$ choice alternatives (thus, $|S|=n$).  The preferences of a decision maker among the alternatives are cast as a relation $R$ on $S$, with $i \,R\, j$ meaning that
the decision maker likes $i$ strictly less than $j$ (hence strictly prefers $j$ over $i$).
In a simple case, the relation $R$ is a \textsl{linear order}, that is an irreflexive, transitive and complete relation (for a definition of these and similar terms, see for instance 
 \citealt[Chapter~1]{Fishburn1985}, or 
\citealt*[Chapter~3]{PirlotVincke1997}).
As is well-known, such an ideal situation occurs when preferences reflect comparisons of utility values 
and no two choice alternatives have the same utility.

\begin{proposition}[Linear order]\label{pro: linear order}
The relation $R$ on the set $S$ of alternatives is a linear order if and only if there exists an injective mapping $u:\;S \to \mathbb{R}$ such that, for all $i$, $j$ in $S$:
\begin{equation}\label{eqn: linear order}
i\,R\,j \;\iff\; u(i) < u(j).
\end{equation}
\end{proposition}

If we drop the injectivity condition in Proposition~\ref{pro: linear order}, thus making room for alternatives having equal utility, we get a \textsl{(strict) weak order} (an asymmetric and negatively transitive relation). 

\begin{proposition}[Weak order]\label{pro: weak order}
The relation $R$ on the set $S$ of alternatives is a weak order if and only if there exists a mapping $u:\;S \to \mathbb{R}$ such that, for all $i$, $j$ in $S$:
\begin{equation}\label{eqn: weak order}
i\,R\,j \;\iff\; u(i) < u(j).
\end{equation}
\end{proposition}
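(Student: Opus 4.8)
The plan is to prove the two implications separately, treating the existence of a representing utility as the substantive content. Throughout I write $i \sim j$ for the \emph{indifference} relation, meaning that neither $i\,R\,j$ nor $j\,R\,i$ holds.

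For the sufficiency direction (existence of $u$ implies that $R$ is a weak order), I would simply unwind the definitions. If $i\,R\,j$ then $u(i) < u(j)$, so $u(j) < u(i)$ fails and $R$ is asymmetric. For negative transitivity, suppose neither $i\,R\,j$ nor $j\,R\,k$ holds; then $u(i) \geq u(j)$ and $u(j) \geq u(k)$, whence $u(i) \geq u(k)$ and $i\,R\,k$ cannot hold. This is the routine half.

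For the necessity direction I would build $u$ from the indifference classes and then invoke Proposition~\ref{pro: linear order}. First I would record that asymmetry forces irreflexivity, and that together with negative transitivity it forces transitivity of $R$: indeed, assuming $k\,R\,i$ and $i\,R\,j$, negative transitivity (in the form $x\,R\,z \Rightarrow x\,R\,y$ or $y\,R\,z$) applied to $i\,R\,j$ with intermediate element $k$ yields $i\,R\,k$ or $k\,R\,j$, and asymmetry rules out $i\,R\,k$, leaving $k\,R\,j$. Next I would show that $\sim$ is an equivalence relation; reflexivity and symmetry are immediate, and transitivity of $\sim$ is exactly where negative transitivity does the real work. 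I would then define a relation $\prec$ on the quotient $S/\!\sim$ by $[i] \prec [j] \iff i\,R\,j$, check that it is well defined on classes, and verify that it is a linear order (irreflexive, transitive, and complete, the last coming from the trichotomy between $i\,R\,j$, $j\,R\,i$, and $i \sim j$). Applying Proposition~\ref{pro: linear order} to the finite linearly ordered quotient yields an injective real map on classes, and pulling it back along $i \mapsto [i]$ produces the desired $u$ on $S$, necessarily non-injective precisely on indifferent pairs.

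The main obstacle is the transitivity of $\sim$, since everything downstream (well-definedness of $\prec$ and its completeness) rests on it. Here I would argue: if $i \sim j$ and $j \sim k$, then from $\lnot(i\,R\,j)$ and $\lnot(j\,R\,k)$ negative transitivity gives $\lnot(i\,R\,k)$, and symmetrically from $\lnot(k\,R\,j)$ and $\lnot(j\,R\,i)$ it gives $\lnot(k\,R\,i)$, so $i \sim k$. A fully self-contained alternative, bypassing Proposition~\ref{pro: linear order}, is to set $u(i) = |\{k \in S : k\,R\,i\}|$ directly; transitivity of $R$ gives $i\,R\,j \Rightarrow u(i) < u(j)$, and the trichotomy together with the equality $\{k: k\,R\,i\} = \{k : k\,R\,j\}$ for indifferent pairs gives the converse.
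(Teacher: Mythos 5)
Your proposal is correct, but there is nothing in the paper to compare it against: the paper states Proposition~\ref{pro: weak order} without proof, treating it as a classical representation result from the measurement-theory literature (the surrounding text just cites \cite{Roberts_1979}, \cite{Fishburn1985} and \cite{PirlotVincke1997} for the definitions and equivalences). What you supply is essentially the standard textbook argument from those sources: the easy direction by unwinding the order on $\mathbb{R}$, and the substantive direction by showing that asymmetry plus negative transitivity make the indifference relation $\sim$ an equivalence, that $R$ descends to a well-defined, complete, irreflexive, transitive relation $\prec$ on $S/\!\sim$, and then pulling back an injective representation of the quotient obtained from Proposition~\ref{pro: linear order}. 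All the steps you flag as delicate (transitivity of $\sim$, well-definedness of $\prec$) are handled correctly via the contrapositive form of negative transitivity, and reducing to Proposition~\ref{pro: linear order} is exactly in the spirit of the paper, which introduces the weak order case as Proposition~\ref{pro: linear order} with injectivity dropped. Two minor points worth tightening if you write this out in full: in the self-contained variant with $u(i) = |\{k \in S : k\,R\,i\}|$, transitivity alone only gives the non-strict inclusion $\{k : k\,R\,i\} \subseteq \{k : k\,R\,j\}$ and hence $u(i) \le u(j)$; strictness needs irreflexivity, since it is $i$ itself that lies in the second set but not the first. And both of your constructions use finiteness of $S$ (for the cardinality count, and for Proposition~\ref{pro: linear order} on the quotient), which is harmless here since the paper fixes $|S| = n$ finite throughout, but is worth stating since the result fails for general infinite $S$ without extra hypotheses.
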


In many situations, comparing utility values defined as real numbers is not realistic.  According to an extended viewpoint, any alternative $i$ from $S$ is assigned a range of values taking the form of a real interval $[\ell(i),h(i)]$ in such a way that $i\,R\,j$ holds exactly if the interval $[\ell(i),h(i)]$ lies entirely before the interval $[\ell(j),h(j)]$ (see Figure~\ref{fig: interval order} for a geometric illustration). The relation $R$ is then called an \textsl{interval order} on $S$.  
Here, one may interpret $\ell(i)$ as the lower utility value of $i$, $h(i)$ as the upper utility value, and $h(i)-\ell(i)$ as a perceptual \textsl{threshold} for utilities.
We call such a pair of mappings $(\ell,h)$ a \textsl{representation} of $R$.

\begin{figure}[htbp]
\begin{center}
\begin{tikzpicture} [>=triangle 45]

\draw[arrows=|-|, thick] (3,0.2) node at +(0,0.4) {$\ell(i)$} -- (4.05,0.2) node at +(0,0.4) {$h(i)$};

\draw[arrows=|-|, thick] (6,0.2) node at +(0,0.4) {$\ell(j)$} -- (8.05,0.2) node at +(0,0.4) {$h(j)$};

\draw (0,-0.1) node at +(0,-0.2) {0} -- (0,0.1);

\draw[->] (0,0) -- (11.5,0) node at +(0.5,0) {$\mathbb R$};
\end{tikzpicture} 
\caption{\rm Geometric representation of $i \,R\, j$ in the case of an interval order $R$, thus: $i$ is less preferable than $j$ if and only if $h(i) < \ell(j)$.}
\label{fig: interval order}
\end{center}
\end{figure}
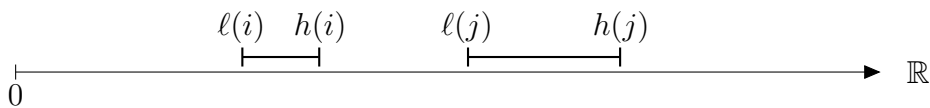
 
A combinatorial characterization of interval orders, due independently to \cite{Fishburn1970} and \cite{Mirkin1972} (but also included in the result on `bi-quasi-series' in \citealt{Ducamp_Falmagne69}), is as follows.

\begin{proposition}[Interval order]\label{pro: interval order}
The relation $R$ on the set $S$ of alternatives satisfies the following two conditions, for all $i$, $j$, $i'$, $j'$ in $S$:
\begin{gather}
\text{not~}(i \,R\, i);\\
(i \,R\, j \text{ and } i' \,R\, j') \;\implies\; (i \,R\, j' \text{ or } i' \,R\, j)
\end{gather}
if and only if there exist two mappings $\ell,\,h:\;S \to \mathbb{R}$ such that the following two conditions hold, for all $i$, $j$ in $S$:
\begin{gather}\label{eqn: interval order}
\ell(i) \le h(i);\\
\label{eqn: interval order bis}
i\,R\,j \;\iff\; h(i) < \ell(j).
\end{gather}
\end{proposition}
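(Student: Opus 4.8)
Since the statement is an equivalence, the plan is to establish the two implications separately; producing a representation from the two combinatorial conditions is the substantive half.

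The implication from representation to conditions is immediate. Suppose mappings $\ell,h$ satisfy \eqref{eqn: interval order} and \eqref{eqn: interval order bis}. Irreflexivity follows at once: $\ell(i)\le h(i)$ rules out $h(i)<\ell(i)$, so not $(i\,R\,i)$ by \eqref{eqn: interval order bis}. For the second condition, assume $i\,R\,j$ and $i'\,R\,j'$, that is $h(i)<\ell(j)$ and $h(i')<\ell(j')$. If both conclusions failed, then not $(i\,R\,j')$ and not $(i'\,R\,j)$ would give $h(i)\ge\ell(j')$ and $h(i')\ge\ell(j)$, whence the chain $\ell(j)\le h(i')<\ell(j')\le h(i)<\ell(j)$, a contradiction; so $i\,R\,j'$ or $i'\,R\,j$ must hold.

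For the converse I would read the two conditions through the upper sections $R(i)=\{j\in S: i\,R\,j\}$ and the lower sections $R^{-1}(i)=\{j\in S: j\,R\,i\}$. The key structural lemma is that the second condition holds if and only if the upper sections are linearly ordered by inclusion, i.e.\ for all $i,i'$ either $R(i)\subseteq R(i')$ or $R(i')\subseteq R(i)$; the condition being self-dual, the lower sections then form a chain as well. I would prove the lemma by contraposition: incomparable sections $R(i),R(i')$ furnish witnesses $j\in R(i)\setminus R(i')$ and $j'\in R(i')\setminus R(i)$ that directly violate the second condition, while nestedness immediately yields the disjunctive conclusion.

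It remains to turn these two chains into intervals. The construction assigns to every element a left endpoint read off the chain of lower sections and a right endpoint read off the chain of upper sections, larger sections pushing $\ell$ farther right and $h$ farther left, so that $i\,R\,j$ becomes $h(i)<\ell(j)$. The hard part—and the heart of the proof—is that the two endpoint families must be placed on a single common scale: counting on the two chains independently fails, since the resulting values then live on incompatible scales and can violate even $\ell(i)\le h(i)$. The remedy is to interleave the two chains into one linear order, equivalently to bring the $0/1$ incidence matrix of $R$ into staircase (Ferrers) form by a \emph{simultaneous} reordering of rows and columns, and then to read $\ell$ and $h$ as positions on this common scale. The verification then splits into two parts: first, that irreflexivity together with the interval condition make this common ordering consistent (this is exactly where the axioms do their work); and second, that strict versus non-strict inclusions of sections translate into strict versus non-strict inequalities of positions, so that $h(i)<\ell(j)\iff i\,R\,j$, with the boundary case $\ell(i)=h(i)$ precisely recording not $(i\,R\,i)$.
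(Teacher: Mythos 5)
A preliminary remark: the paper itself gives no proof of Proposition~\ref{pro: interval order}; it attributes the characterization to Fishburn (1970) and Mirkin (1972), with a precursor in the bi-quasi-series result of Ducamp and Falmagne (1969). So your attempt can only be measured against the classical arguments, and your outline does follow their route. Your representation-to-axioms direction is complete and correct, and your key lemma --- the $2{+}2$ condition holds if and only if the upper sections $R(i)=\{j\in S : i\,R\,j\}$ form a chain under inclusion, and dually for the lower sections --- is exactly the standard first step, with a correct contrapositive argument.

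The genuine gap sits at the step you yourself call the heart of the proof: the interleaving of the two chains is announced but never carried out, so the construction of $(\ell,h)$ is not actually pinned down. What has to be proved is this: on the disjoint union of the distinct upper sections and distinct lower sections, consider the order generated by (i) each inclusion chain, and (ii) the cross rules ``$R(i)$ strictly precedes $R^{-1}(j)$ whenever $i\,R\,j$'' and ``$R^{-1}(j)$ weakly precedes $R(i)$ whenever not $i\,R\,j$''. One must check that these rules are well defined (easy, since they depend only on the sections) and acyclic, meaning no cycle through a strict edge, so that a linear extension exists from which integer positions $h(i)$ and $\ell(j)$ can be read off with the strict/weak distinctions preserved. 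Acyclicity is exactly where the axioms act, and the verification is short once stated: after contracting moves inside either chain, a bad cycle yields $i,j,i',j'$ with $i\,R\,j$, $i'\,R\,j'$, not $i\,R\,j'$ and not $i'\,R\,j$ --- precisely a violation of the $2{+}2$ condition --- while the weak edge from $R^{-1}(i)$ to $R(i)$, which delivers $\ell(i)\le h(i)$, is literally the statement ``not $i\,R\,i$''. None of this appears in your proposal; the sentence ``irreflexivity together with the interval condition make this common ordering consistent'' is the conclusion to be proved, not an argument. A secondary inaccuracy: your claim that ranking the two chains independently must fail is overstated --- Mirkin's and Fishburn's proofs do rank the chains separately, and what makes the two scales compatible is the duality lemma that the numbers of distinct upper and lower sections are equal with matching jump points, which is the same missing verification in another guise. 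Finally, note that the interleaving you want is exactly the converse of the paper's own profile construction for interval orders (sweeping the level $\thr$ and recording the pairs $(X(\thr),Y(\thr))$ of Equations~\eqref{eqn: X(tau)} and \eqref{eqn: X(tau) bis}): the paper passes from a representation to an interleaved sequence, and your missing step is to build such a sequence from the axioms alone.
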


When the threshold $h(i)-\ell(i)$ in Proposition~\ref{pro: interval order} is required to be independent of the alternative $i$, we get the semiorder model, and say that the relation $R$ is a \textsl{semiorder} on $S$.  The latter model appears in \cite{Wiener_1915}, \cite{Armstrong1939} and \cite{Luce1956} and was given its name by the last author.  
The following characterization is due to \cite{{ScottSuppes1958}}.

\begin{proposition}[Semiorder]\label{pro: semiorder}
The relation $R$ on the set $S$ of alternatives satisfies the following three conditions, for all $i$, $j$, $i'$, $j'$, $i''$ in $S$:
\begin{gather*}
\text{not~}(i \,R\, i);\\
(i \,R\, j \text{ and } i' \,R\, j') \;\implies\; (i \,R\, j' \text{ or } i' \,R\, j);\\
(i \,R\, i' \text{ and } i' \,R\, i'') \;\implies\; (i \,R\, j \text{ or } j \,R\, i'')
\end{gather*}
if and only if there exist two mappings $\ell,h:\;S \to \mathbb{R}$ and a nonnegative real number $r$ such that the following two conditions hold, for all $i$, $j$ in $S$:
\begin{gather}\label{eqn: semiorder}
i\,R\,j \;\iff\; h(i) < \ell(j);\\
h(i) - \ell(i) = r.
\end{gather}
\end{proposition}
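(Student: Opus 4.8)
The plan is to establish the two directions separately, spending almost all of the effort on the construction of the representation from the three conditions.

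The \emph{sufficiency} of the numerical representation is a routine chase of inequalities, and I would treat it first to see where the hypotheses are used. Writing $\ell=h-r$ with $r\ge 0$, irreflexivity is immediate because $h(i)<\ell(i)$ would contradict $\ell(i)\le h(i)$. The interval-order condition is proved exactly as in Proposition~\ref{pro: interval order}: assuming $h(i)<\ell(j)$ and $h(i')<\ell(j')$ while both $h(i)\ge\ell(j')$ and $h(i')\ge\ell(j)$ produces the impossible cycle $\ell(j')\le h(i)<\ell(j)\le h(i')<\ell(j')$. For the third condition I expect to use the \emph{constancy} of the threshold decisively: the hypotheses $h(i)<\ell(i')$ and $h(i')<\ell(i'')$ give $h(i'')>h(i)+2r$, whereas negating the conclusion, $h(i)\ge\ell(j)$ and $h(j)\ge\ell(i'')$, gives $h(i'')\le h(i)+2r$, a contradiction. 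That this is the only place where constancy enters already pinpoints why it is the crucial extra ingredient beyond the interval-order case.

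For the \emph{necessity} I would first observe that the first two conditions are exactly the hypotheses of Proposition~\ref{pro: interval order}, so $R$ is in particular an interval order; the substantive task is therefore not to produce \emph{some} representation but to upgrade to one with a \emph{constant} threshold, and this is where the third condition must be spent. The tool I would build is the associated weak order (the \emph{trace}). Since the interval-order axiom forces the upper sets $R(i)=\{k:i\,R\,k\}$ to be pairwise nested, and likewise the lower sets $R^{-1}(i)=\{k:k\,R\,i\}$, each family induces a complete preorder on $S$; the third condition is precisely what makes these two preorders coincide into a single weak order $\succsim$. Using that interval orders are transitive (which itself follows from irreflexivity and the second condition), I would check that $i\,R\,j$ implies $j\succ i$, that $R(i)$ is an up-set for $\succsim$, and that $R$ is constant on the indifference classes, so that passing to the finite chain of classes $C_1\prec C_2\prec\cdots\prec C_m$ reduces everything to placing one real number $a_p$ per class, with $\ell\equiv a_p$ and $h\equiv a_p+r$ on $C_p$.

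The final and hardest step is to choose the numbers $a_1<a_2<\cdots<a_m$ and the single threshold $r$ so that $i\,R\,j\iff a_{p(i)}+r<a_{p(j)}$, where $p(\cdot)$ records the class index. The main obstacle is the word \emph{constant}: I must show that the ``preference gap'' can be made uniform over the whole chain. Concretely I would define $f(p)$ to be the lowest class that is $R$-preferred to $C_p$; the facts that $R(i)$ is an up-set and that higher classes have smaller upper sets force $f$ to be nondecreasing, and $i\,R\,j$ then holds iff $p(j)\ge f(p(i))$. Realizing this staircase with one threshold amounts to solving the system $a_{f(p)}>a_p+r\ge a_{f(p)-1}$ for all $p$; monotonicity of $f$ together with the finiteness of $S$ lets me inflate the gaps $a_{p+1}-a_p$ and then slot $r$ strictly between the largest ``indifferent'' gap and the smallest ``preferred'' gap. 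I expect the monotonicity of $f$ — the assertion that moving up the chain can only raise the bar for being preferred — to be the delicate point and the one genuinely using the third axiom rather than merely the interval-order structure.
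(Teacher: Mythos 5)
The paper does not actually prove this proposition: it states the characterization and attributes it to Scott and Suppes (1958), so there is no internal proof to compare yours against. Judged on its own merits, your proposal is correct in structure and follows the standard modern route (via Pirlot--Vincke-style trace arguments) rather than Scott and Suppes' original induction. The sufficiency direction is complete as written; in particular your use of the constant threshold in the semitransitivity check ($h(i'') > h(i) + 2r$ versus $h(i'') \le h(i) + 2r$) is exactly right. In the necessity direction, your key structural claims all hold: the $2{+}2$ condition makes the upper sets $R(i)$ and lower sets $R^{-1}(i)$ each nested; and the third axiom is precisely what reconciles the two induced total preorders -- if $R(i) \supsetneq R(j)$ via $i\,R\,k$, $\lnot(j\,R\,k)$ and $R^{-1}(i) \supsetneq R^{-1}(j)$ via $m\,R\,i$, $\lnot(m\,R\,j)$, then $m\,R\,i$ and $i\,R\,k$ force $m\,R\,j$ or $j\,R\,k$, a contradiction. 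Constancy of $R$ on trace classes, the up-set property of $R(i)$, and the monotonicity of your staircase function $f$ then all follow as you say.

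The one step you assert rather than prove is the final feasibility claim: that a nondecreasing $f$ with $f(p)>p$ admits $a_1<\cdots<a_m$ and a single $r$ with $a_{f(p)-1} \le a_p + r < a_{f(p)}$ for all $p$. Be careful here: your phrase ``slot $r$ strictly between the largest indifferent gap and the smallest preferred gap'' fails for a generic choice of $a$ -- e.g.\ with equally spaced $a_p = p$ and $f(1)=2$, $f(2)=4$ one needs $r < a_2 - a_1 = 1$ and simultaneously $r \ge a_3 - a_2 = 1$ -- so the gaps must be chosen adaptively, not merely inflated after the fact. The clean way to finish is a forward greedy assignment: fix $r=1$ and define $a_1 < a_2 < \cdots$ in order, at stage $q$ taking $a_q$ slightly above $\max\bigl(a_{q-1},\, \max\{a_p + 1 : f(p) = q\}\bigr)$ while keeping $a_q < \min\{a_{p'} + 1 : f(p') > q\}$; this window is nonempty because monotonicity of $f$ forces $p < p'$ whenever $f(p) = q < f(p') $, hence $a_p + 1 < a_{p'} + 1$, and the invariant $a_{q-1} < \min\{a_{p'}+1 : f(p')>q-1\}$ is preserved inductively. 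Since you explicitly flagged this as the delicate point and correctly isolated both the system to be solved and the monotonicity that makes it solvable, I would count this as a sketch-level gap in an otherwise sound proof, not a wrong approach. As a side remark, your construction automatically yields a representation with no strictly nested intervals, which is the paper's Proposition~5 -- the paper derives that from this proposition, whereas your argument gets both at once.
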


%%%%%%%%%%%%%%%%%%%%%%%%%%%%%%%%%%%%%%%%%%%%%%%%%%%%%%%%%%%%%%%%%%%%%%
\section{Random Preference Models and their Associated Polytopes}

Relations like those of Propositions~\ref{pro: linear order}--\ref{pro: semiorder} can be used to model the decision maker's preferences at a particular moment.  
In general, the decision maker may be uncertain about her preferences and may probabilistically sample a preference from some collection of preference states, when required to make decisions.
We assume all permissible preference states obey the model selected.  
Thus, at any time we may interview the decision maker about any pair $(i,j)$ of alternatives, and collect an answer reflecting her currently sampled preference.  
As a consequence, repeated interviews will lead to frequencies of having $i \,R\, j$ (or not).  On the theoretical side, 
this leads us to posit a probability distribution $P$ on the set $\mathcal{LO}_S$ of all linear orders on $S$---to take this model as an example; similar considerations of course hold in the other three models, for the collection $\mathcal{WO}_S$ of all weak orders,  $\mathcal{IO}_S$ of all interval orders, or $\mathcal{SO}_S$ of all semiorders.  With $P(R)$ denoting the probability that the decision maker's (currently sampled) relation is $R$, we assume that the probability that she prefers alternative $j$ over alternative $i$ is given by the following formula:
\begin{equation}\label{eqn: p(i,j)}
p_{(i,j)} \;=\; \sum\;\{P(R) \st R\in\mathcal{LO}_S \text{ with } i \,R\, j \}.
\end{equation}
Not all collections of real values~$\{p_{(i,j)} \st (i,j)\in S \times S\}$ can be produced in this way, for a probability distribution $P$ on $\mathcal{LO}_S$.  For instance, Equation~\eqref{eqn: p(i,j)} implies $p_{(i,j)} \ge 0$ and, for any $i\neq j$,  $p_{(i,j)} + p_{(j,i)} = 1$.  Other necessary conditions are known, but necessary and sufficient conditions are known only for small values of $n$.
There are doubts that a tractable set of necessary and sufficient conditions  could be found that is valid for all $n$ (because the `linear ordering problem' is NP-hard, see \citealp{Garey_Johnson79}).  To get a better understanding of the collections $\{p_{(i,j)} \st (i,j)\in S \times S\}$ 
that satisfy Equation~\eqref{eqn: p(i,j)} for at least one
probability distribution $P$ on $\mathcal{LO}_S$, we now develop a geometric interpretation and we do the same later for the other three models.  

Each linear order on the set $S$, being a collection of pairs of alternatives of $S$, is a subset of $S \times S$. 
Because linear orders (as well as the other orders we consider in this paper) are irreflexive, they never contain a pair of the form $(i,i)$, with $i \in S$.  
This prompts us to discard all pairs $(i,i)$, for $i \in S$.  Let 
$S \star S := (S \times S) \setminus \{(i,i) \st i \in S \}$.
Consider the space $\mathbb{R}^{S \star S}$, in which a point $x$ has a coordinate $x_{(i,j)}$ for each pair $(i,j)$ of 
distinct
alternatives from $S$.  To any linear order $R$ on $S$, associate the point $x^R$ with coordinates given by
\begin{equation}
\left(x^R\right)_{(i,j)} \;:=\; 
\begin{cases}
1 &\text{if }i \,R\, j,\\
0 &\text{otherwise}.
\end{cases}
\end{equation}
In other words, $x^R$ is the \textsl{characteristic vector} of the linear order $R$.  The convex hull in $\mathbb{R}^{S \star S}$ of the characteristic vectors of all linear orders on $S$, that is the set of all convex combinations of those vectors (linear combinations with nonnegative coefficients summing to one), is the \textsl{linear order polytope of} $S$, which we denote
$\Plo S$:
\begin{equation}
\Plo S \;:=\; \conv\;\{x^R \st R \in \mathcal{LO}_S \}.
\end{equation}
A collection $\{p_{(i,j)} \st (i,j)\in S \star S\}$ satisfies Equation~\eqref{eqn: p(i,j)} for 
at least one probability distribution $P$ on $\mathcal{LO}_S$ if and only if the point $p$ lies in the linear order polytope $\Plo S$.  Thus, to characterize such collections, we would like a criterion for a point $x$ from $\mathbb{R}^{S \star S}$ to lie in $\Plo S$.  
A classical result (see, for instance, \citealt{Ziegler98}) states that any polytope admits a linear description, that is a finite system of linear equations and inequalities whose solution set is exactly the polytope.  Even if we insist that the system has a minimum number of equations and inequalities, 
it is generally not unique; however, the number of linear equations is then the codimension of the polytope, and there is exactly one inequality per facet of the polytope.  
In the case of the linear order polytope $\Plo S$, the number of equations on $\mathbb{R}^{S \star S}$ equals 
$\binom{n}{2} = n(n-1)/2$ (those being the already mentioned $p_{(i,j)} + p_{(j,i)} = 1$), 
but no precise estimation seems to be available on the number of inequalities.  As far as we know, a complete linear description of $\Plo S$ is lacking when $|S| = n > 7$. For $n = 8$, the number of vertices is $8! = 40{,}320$ but only a lower bound on the number of facets is available. This lower bound is huge: the polytope has at least $488{,}602{,}996$ facets \citep{Christof_Reinelt96}.   For other results on the linear order polytope, we refer the reader to  
\cite{Doignon_Fiorini_Joret2006, Fiorini_2006a, Fiorini_2006b, Boyd_Pulleyblank_2009, Doignon_Fiorini_Joret2009, Oswald_Reinelt_Seitz_2009, Marti_Reinelt_2011} and their references.
For an application of the linear order model to test transitivity of preferences on empirical data, see \cite{Regenwetter_Dana_Stober_2010, Regenwetter_Dana_Stober_Guo_2011, Regenwetter_Dana_Stober_2011a}.

The \textsl{weak order polytope of} $S$, which we denote by
$\Pwo S$, is similarly defined in $\mathbb{R}^{S \star S}$:
\begin{equation}
\Pwo S \;:=\; \conv\;\{x^R \st R \in \mathcal{WO}_S \}.
\end{equation}

Note in passing that several publications \cite[e.g.,][]{Doignon_Fiorini2002,Fiorini_Fishburn2004} consider `complete weak orders.'  The latter are transitive and complete relations, in other words the inverses of the complements of weak orders.  The resulting `complete weak order polytope' is the image of our polytope $\Pwo S$ by the affine transformation $\mathbb{R}^{S \star S} \to \mathbb{R}^{S \star S}:\; x \mapsto x'$ with $x'_{(i,j)} = 1 - x_{(j,i)}$.  Hence, all the results about one of these polytopes have a counterpart for the other.

The Ph.D.\ dissertation of \cite{Fiorini_thesis} contains an overview of results about the two order polytopes we just introduced, including a complete description of $P_{WO}^S$ for $|S|=n=4$.
To our knowledge, a complete description of $P_{WO}^S$ is known only up to $n = 5$. For $n = 5$, the weak order polytope $P_{WO}^S$ has $75{,}834$ facets and $541$ vertices \citep{Regenwetter_Stober_2012}.

Two other polytopes play a role in our study.  They are the \textsl{interval order polytope of} $S$
\begin{equation}
\Pio S \;:=\; \conv\;\{x^R \st R \in \mathcal{IO}_S \}
\end{equation}
and the semiorder polytope of $S$
\begin{equation}
\Pso S \;:=\; \conv\;\{x^R \st R \in \mathcal{SO}_S \}.
\end{equation}
The interval order polytope is investigated by \cite{Muller_Schulz95},   
the semiorder polytope by \cite{Suck95}
and both of them by \cite{Doignon_Rexhep_2016}.   
For $n= 4$, \cite{Regenwetter_Stober_2011} reports that the interval order polytope has $207$ vertices and $191$ facets, while the figures for the semiorder polytope are $183$ and $563$.

%%%%%%%%%%%%%%%%%%%%%%%%%%%%%%%%%%%%%%%%%%%%%%%%%%%%%%%%%%%%%%%%%%%%%%
\section{Network Flow Representations of Order Polytopes}\label{sec: Network}

We now move to a third way of investigating our four order polytopes. 
Many applications in psychology and economics aim to evaluate random preference models on human subjects choice data from laboratory studies. 
This typically involves optimizing concave functions (e.g., a log-likelihood function) subject to the constraint that the choice probabilities belong to the polytope in question. Such optimizations are straightforward when the geometric structure of the polytope is fully known. The extended formulations that we now consider
make it possible to solve some of these optimization goals in cases where obtaining a complete geometric representation of the polytope in question is computationally  expensive.

%%%%%%%%%%%%%%%%%%%%%%%%%
\subsection{The concept of an extended formulation}
In the previous section, we described how testing whether a collection of choice probabilities satisfies the linear order model amounts to checking whether a given point lies in the linear order polytope $\Plo S$ (we again base our exposition on this model), which can be done by checking the validity of each linear equation and inequality in its description).  Unfortunately, 
unless we limit ourselves to small enough $n$ the latter task is 
made difficult or even impossible by the inherent intricacy, or sheer unavailability, of a linear description of $\Plo S$.  In this section, we sketch the main steps to overcome this difficulty.  We provide technical constructions and proofs in the next section.  

The main idea is to work with another, ad hoc polytope that projects in some specific way on $\Plo S$ 
(more precisely: there is an affine transformation mapping the ad hoc polytope onto $\Plo S$).
Any linear description of such an ad hoc polytope is called an \textsl{extended formulation} of $\Plo S$.
For a survey of the notion of an extended formulation, see 
\cite{Conforti_Cornuejols_Zambelli2010}, \cite{Kaibel11} and \cite{Wolsey11}.
For the four order polytopes we study in this paper, the 
ad hoc polytope happens to be a ``flow polytope'', so we first define the latter notion and review some known facts.

%%%%%%%%%%%%%%%%%%%%%%%%%
\subsection{The concept of a network flow polytope}
Let $D = (N,A)$ be a network with node set $N$ and arc set $A$ 
\cite[for network terminology, see, for instance,][]{KorteVygen08}.
In $N$, we designate two special nodes: the \textsl{source} node $\src$ and the \textsl{sink} node $\snk$. 

For a node $v$, we denote the sets of arcs leaving and entering $v$ by $\delta^+(v)$ and $\delta^-(v)$, respectively. Formally,
\begin{eqnarray*}
\delta^+(v) &:= &\{a \in A \st \exists w \in N : a = (v,w)\},\\
\delta^-(v) &:= &\{a \in A \st \exists t \in N : a = (t,v)\}.
\end{eqnarray*}

Consider a set $B$ of arcs in $A$. We encode $B$ by its characteristic vector $\chi^B$ in $\mathbb{R}^{A}$, defined by letting $\chi^B_a := 1$ if $a \in B$ and $\chi^B_a := 0$ if $a \in A \setminus B$ (we notate the characteristic vectors $x^R$ and $\chi^B$ differently to emphasize that they refer to the distinct master sets $S \star S$ and $A$, respectively). For a vector $\Phi$ in $\mathbb{R}^{A}$, we define the number
\begin{equation}
\Phi(B) := \sum_{a \in B} \Phi_a.
\end{equation}

A \textsl{flow} in $D$ is a vector $\Phi$ from $\mathbb{R}^{A}$, associating a nonnegative number $\Phi_a$ to each arc $a$ of the network, such that the out-flow $\Phi(\delta^{+}(v))$ equals the in-flow $\Phi(\delta^{-}(v))$ at each node $v$ distinct from the source node $\src$ and the sink node $\snk$. The \textsl{value} of a flow $\Phi$ equals $\Phi(\delta^{+}(\src)) - \Phi(\delta^{-}(\src))$, that is, the net out-flow at the source node. A flow $\Phi$ is said to be \textsl{integral} if $\Phi_a$ is an integer, for all arcs $a$.

In the context of this work, we assume that the network $D$ is acyclic. We define the \textsl{flow polytope} $F = F(D)$ of the network $D$ as the polytope whose vertices are the characteristic vectors of the sets of arcs of all the $\src$--$\snk$ (directed) paths in $D$.  Any such characteristic vector is an integral $\src$--$\snk$ flow in $D$ of value $1$.  Because we assume that $D$ is acyclic, the converse also holds: any vector $\Phi$ from $\mathbb{R}^{A}$ which is an integral $\src$--$\snk$ flow in $D$ of value $1$ is the characteristic vector of an $\src$--$\snk$ path in $D$.  A complete linear description of $F(D)$ is as follows \cite[see for instance][Theorem~8.8]{KorteVygen08}---we call it the \textsl{canonical description} of the flow polytope $F(D)$:

\begin{equation}\label{eqn: flow polytope}
F(D) =
\left\{\Phi \in \mathbb{R}^{A} \;
\vrule height23pt width1pt depth14pt \; 
\begin{array}{rcl@{\quad}l}
\Phi(\delta^{+}(v)) - \Phi(\delta^{-}(v)) &= &0, &\forall v \in N \setminus \{\src,\snk\},\\
\Phi(\delta^{+}(\src)) - \Phi(\delta^{-}(\src)) &= &1,\\
\Phi_a &\geqslant &0, &\forall a \in A
\end{array}
\right\}.
\end{equation}

Following the current practice in extended formulations, we define the \textsl{size} of a linear description as the total number of inequalities in the description, thus disregarding the number of variables and equalities, as well as the bit complexity of the coefficients. Hence, the size of the canonical description of $F(D)$ is $|A|$.

Again, let $S$ denote the set of alternatives, with $|S| = n$.  We now show that three of our order polytopes, namely the linear order, weak order and interval order polytopes, share the following property for some constant $c$ associated with each family of polytopes: For each of these polytopes, there exists a flow polytope $F$ with a linear description of size $O(c^n)$ and a projection from $\mathbb{R}^{A}$ to $\mathbb{R}^{S \star S}$ that maps the flow polytope $F$ to the considered order polytope.  We explicitly build this flow polytope, which is thus an extended formulation.  We also build a flow polytope providing an extended formulation for the semiorder polytope with, surprisingly, canonical description size in $\Omega(n!)$, which is not $O(c^n)$ for any constant $c$.

As we discuss later in Section~\ref{sec:optimality}, it follows from recent results that all our extended formulations are size-optimal, except perhaps that for the semiorder polytope. More precisely, any extended formulation for any of our order polytopes for a set of alternatives of size $n$ has size at least $(3/2)^{\lfloor n/2 \rfloor}$.

In order to specify the networks, the key idea is to use ordinal representations of the relations involved. We start with weak orders.

%%%%%%%%%%%%%%%%%%%%%%%%%
\subsection{The case of weak orders}  
Remember from Proposition~\ref{pro: weak order} that a relation $R$ on $S$ is a weak order exactly if there exists a mapping $u$ from $S$ to $\mathbb{R}$ such that
\begin{equation}
i\,R\,j \iff u(i) < u(j).
\end{equation} 
For each $\thr$ in $\mathbb{R}$, we let
\begin{equation}
X(\thr) := \{i \in S \st u(i) < \thr\}.
\end{equation} 

Now, imagine increasing the \textsl{level} $\thr$ continuously from $\thr = \min u(S)$ to $\thr = \max u(S) + 1$. Thus, we observe a finite sequence of distinct sets $X_0$, $X_1$, \ldots, $X_m$, starting with the empty set $\varnothing$ and ending with the entire set $S$. We call this sequence the \textsl{profile of the weak order\/} $R$ (notice that the profile of the weak order $R$ does not depend on the representation of $R$, and also that it determines $R$). 
More formally, let $u(S) = \{\thr_0, \thr_1, \dots, \thr_{m-1}\}$ with $\thr_0 < \thr_1 < \dots < \thr_{m-1}$. For every $k$ in $\{0,1,\ldots,m-1\}$, set $X_k := \{i \in S \st u(i) < \thr_k\}$, and moreover $X_{m}:=S$.
The profile of the weak order $R$ forms a chain
\begin{equation}
\varnothing =: X_0 \subset X_1 \subset \cdots \subset X_m := S.
\end{equation} 
(Here and throughout, $\subset$ indicates strict inclusion.)

Now form the network whose nodes are the subsets of $S$, letting $\src := \varnothing$ and $\snk := S$, and whose arcs are the pairs $(X,Z)$ of subsets of $S$ such that $X \subset Z$.
We see that the profile of each weak order $R$ determines an $\src$--$\snk$ path in this network.
Conversely, any $\src$--$\snk$ path in this network determines a unique weak order. 
Because the two constructions correspond to mappings that are each other's inverses, 
we get a bijection from the set of $\src$--$\snk$ paths to the set of weak orders on $S$.
Later (in Subsection~\ref{sub_weak_order}), we prove that this bijection yields a projection from the flow polytope of the network we just built to the weak order polytope of $S$.

%%%%%%%%%%%%%%%%%%%%%%%%%
\subsection{The case of linear orders} 
We now turn to the linear orders on $S$.
In the above network (for weak orders), they correspond to the $\src$--$\snk$ paths  
that use only arcs $(X,Z)$ where 
$X \subset Z$ with the restriction that $|X|+1=|Z|$.  Because the $\src$--$\snk$ paths describing linear orders only use those arcs, we now delete all the other arcs from the network while keeping all the nodes.  In the resulting network, the $\src$--$\snk$ paths bijectively correspond to the linear orders on $S$.
We later infer a projection from the flow polytope of this network  
to the linear order polytope of $S$ (see Theorem~\ref{thm: proj_LO}).

%%%%%%%%%%%%%%%%%%%%%%%%%
\subsection{The case of interval orders} 
To create a network for interval orders, we use Proposition~\ref{pro: interval order} according to which
a relation $R$ on $S$ is an interval order when there exist two mappings $\ell$ and $h$ from $S$ to $\mathbb{R}$ such that $\ell(i) \leqslant h(i)$ for all $i \in  S$ and
\begin{equation}
i\,R\,j \iff h(i) < \ell(j). 
\end{equation}
In the following, we always assume that $\ell$ and $h$ are both one-to-one, and that the images of $S$ under $\ell$ and $h$ are disjoint. 
Why can we modify the mappings $\ell$ and $h$ in order to satisfy these two conditions?  Each value of $\ell$ is only constrained by Equations~\eqref{eqn: interval order} and \eqref{eqn: interval order bis} to lie in a convex subset of $\mathbb{R}$ which is determined by the values of $h$.  Moreover this convex set has more than one point.  Hence, we can always adjust values of $\ell$ in order to make $\ell$ one-to-one.  Similarly, we can next adjust values of $h$ in order to make $h$ one-to-one and at the same time $\ell(S)$ and $h(S)$ disjoint.

Each \textsl{level} $\thr$ from $\mathbb{R}$ determines two subsets of $S$ (for a geometric example, see Figure~\ref{fig semiorder intervals}, but, for the moment, ignore the last sentence in the caption):
\begin{eqnarray}\label{eqn: X(tau)}
X(\thr) & := &\{i \in S \st \ell(i) < \thr\},\\ 
\label{eqn: X(tau) bis}
Y(\thr) & := &\{i \in S \st h(i) < \thr\}.
\end{eqnarray}
Because $\ell(i) \le h(i)$ holds for all $i \in S$, we have $Y(\thr) \subseteq X(\thr)$ for every level $\thr$. When we continuously increase $\thr$ from $\thr = \min \ell(S)$ to $\thr = \max h(S) + 1$, we observe a finite sequence of distinct pairs of sets $(X_0,Y_0)$, $(X_1,Y_1)$, \ldots, $(X_m,Y_m)$ that forms the \textsl{profile of the interval order\/} $R$ (notice that the profile of an interval order $R$ technically depends on the chosen representation $\ell$, $h$ of $R$; however, each such profile determines $R$).
Our assumptions on the mappings $\ell$ and $h$ imply that any two consecutive pairs of sets $(X_k,Y_k)$, $(X_{k+1},Y_{k+1})$ satisfy the following three conditions: 
\begin{enumerate}
\item[$\cdot$] $X_k \subseteq X_{k+1}$,
\item[$\cdot$] $Y_k \subseteq Y_{k+1}$,
\item[$\cdot$] either
$\left\{
\begin{array}{l}
|X_{k+1}| = |X_k| + 1 \\[1mm]
|Y_{k+1}| = |Y_k|
\end{array}
\right.$
\quad or
$
\left\{
\begin{array}{l}
|X_{k+1}| = |X_k|\\[1mm]
|Y_{k+1}| = |Y_k| + 1.
\end{array}
\right.$
\end{enumerate}
Using these conditions as an inspiration, we define the network  
$\Dio S$ whose nodes are pairs $(X,Y)$ of subsets of $S$ with $Y \subseteq X$, with source node $\src := (\varnothing,\varnothing)$ and sink node $\snk := (S,S)$ (the details are given below, including the definition of the arcs). Each $\src$--$\snk$ path in this network encodes an interval order on $S$. Although several of these paths encode the same interval order, we obtain a projection from the flow polytope of $\Dlo S$ to the interval order polytope of $S$.

%%%%%%%%%%%%%%%%%%%%%%%%%
\subsection{The case of semiorders} 
The case of semiorders is a bit more involved.  Semiorders are particular interval orders, so they correspond to profiles of interval orders having an additional property.  We would like to build a new network whose $\src$--$\snk$ paths exactly correspond to the profiles of semiorders.  We obtain the nodes of the new network 
similarly as those of $\Dlo S$, but we store more information at each node.
The crucial property on which we rely is well known \cite[see for instance][]{Bogart_West_1999}:
an interval order is a semiorder if and only if it admits a representation by intervals such that no interval contains another interval unless they share an endpoint.  We now state this property 
formally.

\begin{proposition}\label{pro: no included interval}
A relation $R$ on the set $S$ is a semiorder if and only if  
there exist two mappings $\ell, h:\;S \to \mathbb{R}$ such that, for all $i$, $j$ in $S$:
\begin{gather}\label{eqn: no embedded intervals}
\ell(i) \le h(i);\\
i\,R\,j \;\iff\; h(i) < \ell(j);\\
\ell(i) < \ell(j) \implies h(i) \le h(j).
\end{gather}
\end{proposition}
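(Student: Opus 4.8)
The plan is to connect the new ``no-nested-interval'' representation to the two descriptions of semiorders already in hand from Proposition~\ref{pro: semiorder}, namely the three combinatorial axioms and the constant-threshold representation. The forward implication will be almost free, while the converse will require verifying one semiorder axiom by hand.

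For the forward direction (a semiorder admits such a representation), I would begin from the constant-threshold representation supplied by Proposition~\ref{pro: semiorder}: there exist mappings $\ell,h$ and a real $r \ge 0$ with $i\,R\,j \iff h(i) < \ell(j)$ and $h(i)-\ell(i)=r$. The claim is that this same pair $(\ell,h)$ already satisfies all three conditions of the present Proposition. Indeed, $\ell(i)\le h(i)$ holds because $r\ge 0$; the second condition is literally the representation; and the third follows by monotonicity, since $\ell(i)<\ell(j)$ gives $h(i)=\ell(i)+r<\ell(j)+r=h(j)$. Thus the constant-threshold representation is a special case of the desired one, and this direction is immediate.

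For the converse, I would take $(\ell,h)$ satisfying the three stated conditions and show that $R$ is a semiorder. The first two conditions are exactly the representation of Proposition~\ref{pro: interval order}, so $R$ is an interval order; in particular $R$ satisfies the first two axioms of the semiorder characterization in Proposition~\ref{pro: semiorder} (irreflexivity and the interval condition). It then remains only to check the third semiorder axiom, $(i\,R\,i' \text{ and } i'\,R\,i'') \implies (i\,R\,j \text{ or } j\,R\,i'')$, for every $j$. I would argue by contradiction: through the representation, $i\,R\,i'$ and $i'\,R\,i''$ read $h(i)<\ell(i')$ and $h(i')<\ell(i'')$, while failure of the conclusion reads $\ell(j)\le h(i)$ and $\ell(i'')\le h(j)$. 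Chaining these yields $h(i')<\ell(i'')\le h(j)$, hence $h(i')<h(j)$; the third condition in contrapositive form, $h(a)>h(b)\implies \ell(a)\ge\ell(b)$, applied to the pair $(j,i')$ then gives $\ell(i')\le\ell(j)$, whereas $\ell(j)\le h(i)<\ell(i')$ forces $\ell(j)<\ell(i')$, a contradiction. With the third axiom established, Proposition~\ref{pro: semiorder} certifies that $R$ is a semiorder.

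The main obstacle is precisely this inequality-chasing step in the converse: one must invoke the third condition in its contrapositive form on exactly the right pair of alternatives and combine it with the two strict inequalities coming from the hypothesis to reach the contradiction. Everything else reduces to unwinding the representation and quoting Propositions~\ref{pro: interval order} and~\ref{pro: semiorder}.
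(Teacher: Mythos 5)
Your proposal is correct and follows exactly the route the paper indicates: the paper gives no detailed argument, stating only that Proposition~\ref{pro: no included interval} ``can be easily deduced from Proposition~\ref{pro: semiorder},'' and your proof carries out precisely that deduction (the constant-threshold representation trivially satisfies the three conditions, and conversely the third Scott--Suppes axiom is verified by the inequality chase you describe, which checks out). In effect you have filled in the omitted details of the paper's own sketch rather than taken a different path.
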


Proposition~\ref{pro: no included interval} can be easily deduced from Proposition~\ref{pro: semiorder} (direct proofs also exist, see for instance \citealt{Bogart_West_1999}).

The network we build for semiorders has as nodes all triples $(X,Y,L)$ with $Y \subseteq X \subseteq S$ and with $L$ a linear order on $X \setminus Y$.  (Notice that when $X \setminus Y = \varnothing$, we must have $L=\varnothing$.)  The source and sink nodes are $\src := (\varnothing,\varnothing, \varnothing)$ and $\snk := (S,S, \varnothing)$, respectively.
To motivate the later specification of arcs, we show how each representation of a specific type of a semiorder generates a sequence of nodes that eventually becomes an $\src$--$\snk$ path.  
Given a representation $\ell$, $h$ as in Proposition~\ref{pro: no included interval} for a semiorder $R$, first adjust the values of $\ell$, $h$ to get a representation where $\ell$, $h$ are one-to-one and where $\ell(S) \cap h(S) = \varnothing$.
Then, for any real number $\thr$ with $\min \ell(S) \leqslant \thr \leqslant \max h(S)+1$, take the node obtained by setting $X := X (\thr)$, $Y := Y (\thr)$ as in Equations~\eqref{eqn: X(tau)} and \eqref{eqn: X(tau) bis}, and for $i,j\in X \setminus Y$ letting $i \,L\, j$ exactly when $\ell(i) < \ell(j)$.  An illustration is given in Figure~\ref{fig semiorder intervals}.  Varying the value of $\thr$, we thus generate a sequence of distinct nodes.  We later specify the  arcs so that each such sequence becomes an $\src$--$\snk$ path, and each $\src$--$\snk$ path is such a sequence.  Furthermore, the resulting flow polytope provides an extended formulation of the semiorder polytope.  The details are provided in Subsection~\ref{Semiorder polytopes}.  In particular, we show that the canonical description of this flow polytope grows faster in size than the extended formulations of our three other order polytopes. In the case of the semiorder polytope, we get an extended formulation of size $2^{\Theta(n \log n)}$, while for the other order polytopes, we obtain extended formulations of size $2^{\Theta(n)}$.

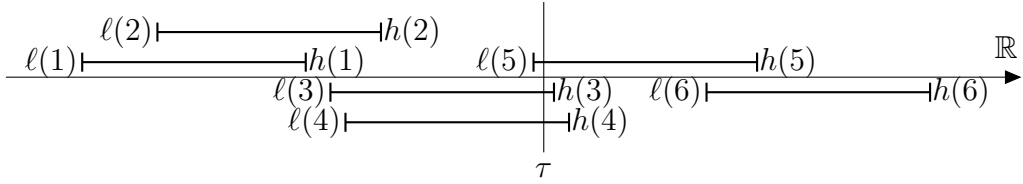
\begin{figure}[htbp]
\begin{center}
\begin{tikzpicture} [>=triangle 45]
\draw[arrows=|-|, thick] (0,0.2) node at +(-0.4,0) {$\ell(1)$} -- ++(3.0,0) node at +(0.4,0) {$h(1)$};
\draw[arrows=|-|, thick] (1,0.6) node at +(-0.4,0) {$\ell(2)$} -- ++(3.0,0) node at +(0.4,0) {$h(2)$};
\draw[arrows=|-|, thick] (3.3,-0.2) node at +(-0.4,0) {$\ell(3)$} -- ++(3.0,0) node at +(0.4,0) {$h(3)$};
\draw[arrows=|-|, thick] (3.5,-0.6) node at +(-0.4,0) {$\ell(4)$} -- ++(3.0,0) node at +(0.4,0) {$h(4)$};
\draw[arrows=|-|, thick] (6,0.2) node at +(-0.4,0) {$\ell(5)$} -- ++(3.0,0) node at +(0.4,0) {$h(5)$};
\draw[arrows=|-|, thick] (8.3,-0.2) node at +(-0.4,0) {$\ell(6)$} -- ++(3.0,0) node at +(0.4,0) {$h(6)$};

\draw (6.15,-1) node[below] {$\thr$} -- (6.15,1);

\draw[->] (-1,0) -- (12.5,0) node at +(-0.2,0.4) {$\mathbb R$};
\end{tikzpicture} 
\caption{\rm Illustration for the network node coming from the level $\thr$, in the case of interval orders: $X(\thr)=\{1,2,3,4,5\}$, $Y(\thr)=\{1,2\}$ (cf.~Equations~\eqref{eqn: X(tau)} and \eqref{eqn: X(tau) bis}). In the case of semiorders, we add the linear order $L=\{(3,4),\, (4,5),\, (3,5)\}$, because $\ell(3) < \ell(4) < \ell(5)$.}
\label{fig semiorder intervals}
\end{center}
\end{figure}

%%%%%%%%%%%%%%%%%%%%%%%%%%%%%%%%%%%%%%%%%%%%%%%%%%%%%%%%%%%%%%%%%%%%%%
\section{Customizing Extended Formulations as Network Flow Polytopes}
\label{sec: Customizing Extended Formulations}

In the next four subsections, we give detailed constructions and proofs of the results announced in Section~\ref{sec: Network}.

%%%%%%%%%%%%%%%%%%%%%%%%%
\subsection{The case of the linear order polytope}

The network $\Dlo S = (N,A)$ we use for the case of linear orders of $S$ is defined by
\begin{eqnarray*}
N & := & \{X \st X \subseteq S\},\\ 
A & := & \{(X,Z)\in N \times N  \st X \subset Z,\ |Z| = |X| + 1\}
\end{eqnarray*}
(in technical terms, the network $\Dlo S$ is the ``Hasse diagram'' of the ``Boolean lattice'' of $S$). 
Thus, $|N| = 2^n$ and $|A| = n\,2^{n-1}$. The source and sink nodes are $\src := \varnothing$ and $\snk := S$, respectively. We now prove that the resulting flow polytope $F(\Dlo S)$ projects in a natural way onto the linear order polytope $\Plo S$.

\begin{theorem}
\label{thm: proj_LO}
Let $\pi$ be the projection from $\mathbb{R}^{A}$ to $\mathbb{R}^{S \star S}$, mapping a point $\Phi \in \mathbb{R}^A$ to the point $x \in \mathbb{R}^{S \star S}$ specified for all distinct
$i$, $j$ in $S$ by
\begin{equation}
\label{eqn: proj_LO}
x_{(i,j)} := \sum \{\Phi_a \st a = (X,Z) \in A,\ i \in X \textrm{ and } j \in Z \setminus X\}.
\end{equation}
The projection $\pi$ maps the flow polytope $F(\Dlo S)$ onto the linear order polytope $\Plo S$.  As a consequence, the canonical description of the flow polytope $F(\Dlo S)$, as in Equation~\eqref{eqn: flow polytope}, forms an extended formulation of $\Plo S$ 
with description size $n \, 2^{n-1}$.
\end{theorem}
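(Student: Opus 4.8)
The plan is to use that $\pi$ is a \emph{linear} map and that, by definition, $F(\Dlo S)$ is the convex hull of the characteristic vectors $\chi^P$ of the $\src$--$\snk$ paths $P$ in $\Dlo S$. Since an affine map sends the convex hull of a finite set to the convex hull of the images, the whole statement reduces to evaluating $\pi$ on each such vertex. The single fact I would establish is the vertex identity $\pi(\chi^P) = x^R$, where $R$ is the linear order encoded by $P$ under the bijection between $\src$--$\snk$ paths and linear orders recalled in Section~\ref{sec: Network}.

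To prove that identity, fix a path $P$ and let $\varnothing = X_0 \subset X_1 \subset \cdots \subset X_n = S$ be the chain of nodes it visits, so its $k$-th arc is $(X_{k-1},X_k)$ with $X_k \setminus X_{k-1}$ a single alternative, say $b_k$. Under the encoding, $b_k$ is less preferred than $b_\ell$ precisely when $k < \ell$, that is, the alternative added earlier has the smaller utility; thus $b_k \, R \, b_\ell \iff k < \ell$. Now I would simply unwind the defining formula~\eqref{eqn: proj_LO}. As $\chi^P_a = 1$ only on the arcs of $P$, the sum defining $x_{(i,j)}$ keeps only path arcs $(X,Z)$ with $i \in X$ and $j \in Z \setminus X$. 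The condition $j \in Z \setminus X$ selects the \emph{unique} arc at which $j$ is added, and then $i \in X$ says exactly that $i$ was added strictly earlier. Hence $x_{(i,j)} = 1$ iff $i$ precedes $j$ along $P$, iff $i \, R \, j$, and $x_{(i,j)} = 0$ otherwise; that is, $\pi(\chi^P) = x^R$.

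With the vertex computation in hand, the projection claim follows at once:
\[
\pi\bigl(F(\Dlo S)\bigr) = \pi\bigl(\conv\{\chi^P \st P \text{ a path}\}\bigr) = \conv\{\pi(\chi^P)\} = \conv\{x^R \st R \in \mathcal{LO}_S\} = \Plo S,
\]
where the middle equality is linearity of $\pi$ and the last uses that $P \mapsto R$ is a bijection onto $\mathcal{LO}_S$. For the consequence, I would invoke the definition of an extended formulation: $\pi$ is an affine map carrying $F(\Dlo S)$ onto $\Plo S$, so the canonical description~\eqref{eqn: flow polytope} of $F(\Dlo S)$ is an extended formulation of $\Plo S$. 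Its size is the number of inequalities, namely the $|A|$ nonnegativity constraints $\Phi_a \geqslant 0$ (the flow-conservation and value constraints are equalities and do not count toward the size), and since $|A| = n\,2^{n-1}$ the size is $n\,2^{n-1}$.

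There is no genuinely deep step here: the entire content is the vertex identity $\pi(\chi^P) = x^R$. The only place to be careful is the bookkeeping in that identity---confirming that for each ordered pair $(i,j)$ exactly one arc of $P$ contributes to the sum, and checking the orientation convention (earlier-added alternatives are the less-preferred ones) so that one obtains $x_{(i,j)} = 1 \iff i\,R\,j$ and not its transpose.
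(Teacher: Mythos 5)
Your proof is correct and takes essentially the same route as the paper's: both reduce everything to the vertex identity $\pi(\chi^{A(P)}) = x^R$, verified by observing that for each pair $(i,j)$ the only arc of the path that can contribute to Equation~\eqref{eqn: proj_LO} is the unique arc at which $j$ is added, with $i \in X$ there exactly when $i\,R\,j$, and both count the $|A| = n\,2^{n-1}$ nonnegativity constraints for the size claim. Your explicit appeal to linearity of $\pi$ (the image of a convex hull is the convex hull of the images, combined with the path--order bijection) merely spells out what the paper compresses into its opening ``it suffices to prove that the image of the vertex set is the vertex set'' step.
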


\begin{proof}
It suffices to prove that the image of the vertex set of $F(\Dlo S)$ under the projection $\pi$ is exactly the vertex set of $P_{LO}^S$. 

First, consider a vertex $v$ of $P_{LO}^S$. Then $v$ is the characteristic vector of a linear order $R$ on $S$. By considering any ordinal representation $u :~ S \to \mathbb{R}$ of $R$ and the corresponding profile, we get an $\src$--$\snk$ path in $\Dlo S$, say $P$. The characteristic vector $\chi^{A(P)}$ of the set of arcs of this path is a vertex of $F(\Dlo S)$. 
We establish that $\pi(\chi^{A(P)}) = v$, i.e., $\Phi=\chi^{A(P)}$ in \eqref{eqn: proj_LO} 
yields $x_{(i,j)}=v_{(i,j)}$ for all 
distinct 
alternatives $i$ and $j$ of $S$.  
 
We evaluate the right-hand side of \eqref{eqn: proj_LO} at $\Phi := \chi^{A(P)}$. 
The conditions $i \in X$ and $j\in Z \setminus X$ imply $u(i) < u(j)$, and so $i \,R\, j$.  Moreover, they also imply $Z = X \cup \{j\}$, because here $X \subset Z$ and $|Z|=|X|+1$.  By definition of the characteristic vector $\Phi = \chi^{A(P)}$, the value $\Phi_a$ for any arc $a$ is $0$ or $1$.  From what we just proved, $\Phi_a=1$ can occur in \eqref{eqn: proj_LO} only for the unique arc $(X,Z)$ such that $Z = \{k\in S \st u(k) \le u(j)\}$ and $X = Z \setminus \{j\}$.  Thus, we have $x_{(i,j)}=1$ in \eqref{eqn: proj_LO} if and only if $i \,R\, j$ (and $x_{(i,j)}=0$ otherwise).  This establishes $\pi(\chi^{A(P)}) = v$.

Second, consider a vertex $\Phi$ of $F(\Dlo S)$. Then $\Phi$ is the characteristic vector of the set of arcs of an $\src$--$\snk$ path in $\Dlo S$. This path produces a profile that in turn represents a unique linear order $R$.  Letting $v$ denote the characteristic vector of $R$, we again have $\pi(\Phi) = v$.

The canonical description of the flow polytope $F(\Dlo S)$ has $n\,2^{n-1}$ inequalities.
\end{proof}

\begin{remark}
Recall that, for $n = 8$, the linear order polytope has at least $488{,}602{,}996$ facets. 
In contrast, the size of the canonical description of the corresponding flow polytope $F$ is only $1{,}024$.  
Even for $n=6$ or $n=7$, the minimal description of $P_{LO}^S$ has size larger than the canonical description of the extended formulation.
\end{remark}

%%%%%%%%%%%%%%%%%%%%%%%%%
\subsection{The case of the weak order polytope}\label{sub_weak_order}
For the weak orders of $S$, we define the network $\Dwo S = (N,A)$ by
\begin{eqnarray*}
N & := & \{X \st X \subseteq S\},\\ 
A & := & \{(X,Z)\in N \times N \st X \subset Z \}
\end{eqnarray*}
with the source and sink nodes $\src := \varnothing$ and $\snk := S$, respectively (i.e., the ``Boolean lattice'' $(\mathcal{P}(S),\subset)$ of $S$ with strict inclusion).
Clearly, we have $|N| = 2^n$. Because each arc $(X,Z)$ of $\Dwo S$ can be encoded as a word of size $n$ on the alphabet $\{a,b,c\}$ having at least one $b$ (the letters $a$, $b$ and $c$ correspond to alternatives of $X$, $Z \setminus X$ and $S \setminus Z$, respectively), we see that $|A| = 3^n - 2^n$. 

\begin{theorem}
\label{thm: proj_WO}
Let $\pi$ be the projection from $\mathbb{R}^{A}$ to $\mathbb{R}^{S \star S}$, mapping a point $\Phi$ from $\mathbb{R}^{A}$ to the point $x$ in $\mathbb{R}^{S \star S}$ given by Equation~\eqref{eqn: proj_LO} above. Then $\pi$ maps the flow polytope $F(\Dwo S)$ to the weak order polytope $\Pwo S$. This yields the extended formulation $F(\Dwo S)$ of $\Pwo S$, whose canonical description has size $3^n-2^n$.
\end{theorem}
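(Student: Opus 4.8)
The plan is to follow the template of the proof of Theorem~\ref{thm: proj_LO} verbatim, exploiting the fact already recorded in the exposition that the profile construction gives a bijection between the $\src$--$\snk$ paths of $\Dwo S$ and the weak orders on $S$. Since $\pi$ is linear (hence affine) and $F(\Dwo S)$ is a polytope, one has $\pi(F(\Dwo S)) = \conv \pi(\vertexset F(\Dwo S))$, and the vertices of $F(\Dwo S)$ are exactly the characteristic vectors $\chi^{A(P)}$ of the $\src$--$\snk$ paths $P$ (the network is acyclic, as its arcs respect strict inclusion). On the other side, $\Pwo S = \conv\{x^R \st R \in \mathcal{WO}_S\}$, and because these are distinct $0/1$ points they are precisely the vertices of $\Pwo S$. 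So it suffices to prove that the image of the vertex set of $F(\Dwo S)$ under $\pi$ is exactly the vertex set of $\Pwo S$; concretely, that $\pi(\chi^{A(P)}) = x^{R}$ whenever $P$ is the path encoding the weak order $R$.

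First I would fix a weak order $R$ with representation $u$ and its profile $\varnothing = X_0 \subset X_1 \subset \cdots \subset X_m = S$, so that $X_k \setminus X_{k-1}$ collects the alternatives at the $k$-th smallest value of $u$. The path $P$ has arc set $\{(X_{k-1},X_k) \st 1 \le k \le m\}$, and I would evaluate the right-hand side of Equation~\eqref{eqn: proj_LO} at $\Phi := \chi^{A(P)}$. For a fixed distinct pair $(i,j)$, only the path arcs contribute, and among these the requirement $j \in Z \setminus X$ singles out the unique arc at which $j$ is introduced; at that arc the further requirement $i \in X$ holds exactly when $i$ was introduced strictly earlier, i.e.\ when $u(i) < u(j)$. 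Hence $x_{(i,j)} = 1$ iff $i \, R \, j$ and $x_{(i,j)} = 0$ otherwise, which is $\pi(\chi^{A(P)}) = x^R$. Conversely, given any vertex $\Phi$ of $F(\Dwo S)$, I would read off the chain of nodes traversed by the corresponding path, recover the unique weak order $R$ whose profile is that chain (taking $u(i)$ to be the index of the node at which $i$ first appears), and rerun the same evaluation to get $\pi(\Phi) = x^R$. Together the two directions give $\pi(\vertexset F(\Dwo S)) = \vertexset \Pwo S$, hence $\pi(F(\Dwo S)) = \Pwo S$, and the size $3^n - 2^n$ is the value of $|A|$ computed just above the statement.

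The one step that needs genuine care—and the only real departure from the linear order argument—is the identification of the contributing arc. For linear orders each arc adds a single element, so $Z = X \cup \{j\}$ pins down the arc immediately; for weak orders an arc may add several alternatives at once (a tie), so $Z \setminus X$ need not be a singleton. The point I expect to be the main obstacle is checking that this does not corrupt the count: for a fixed ordered pair $(i,j)$ there is still exactly one path arc with $j \in Z \setminus X$, and the condition $i \in X$ at that arc correctly distinguishes $u(i) < u(j)$ from $u(i) = u(j)$—in the tied case both $i$ and $j$ lie in $Z \setminus X$, so $i \notin X$, the term vanishes, and this matches $\lnot(i \, R \, j)$. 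Once this is verified, the remainder of the argument is a word-for-word adaptation of the proof of Theorem~\ref{thm: proj_LO}.
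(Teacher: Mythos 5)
Your proposal is correct and follows essentially the same route as the paper, which likewise proves the theorem by paralleling Theorem~\ref{thm: proj_LO} via the path--profile bijection and identifying, for each pair $(i,j)$, the unique contributing arc $(X,Z)$ where $Z$ is the smallest profile set containing $j$ and $X$ is its predecessor. The tie subtlety you flag (arcs adding several alternatives at once, with $u(i)=u(j)$ forcing $i \notin X$) is exactly what the paper's sketch encodes implicitly in that identification, so your write-up simply supplies the details the paper omits.
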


\begin{proof}
The proof is parallel to that of Theorem~\ref{thm: proj_LO}, hence we provide only a sketch. 
Take any vertex $v$ of the weak order polytope $\Pwo S$, with $v$ encoding the weak order $R$.  Then $v$ is the image by $\pi$ of a well defined vertex $\chi^{A(P)}$ of the flow polytope $F(\Dwo S)$.  
To obtain $\chi^{A(P)}$, take $P$ to be the path in $(\mathcal{P}(S),\subset)$ resulting from the weak order $R$.  Here, for a given weak order $R$ and distinct alternatives $i$, $j$ in $X$, the special pair $(X,Z)$ in~\eqref{eqn: proj_LO} consists of the smallest set $Z$ in the profile of $R$ that contains $j$, while $X$ is just the set preceding $Z$ in the profile.
\end{proof}

\begin{remark}
Recall that, for $n = 5$, the weak order polytope $P_{WO}^S$ has $75{,}834$ facets \citep{Regenwetter_Stober_2012}.  In constrast, the size of the canonical description of the corresponding flow polytope $F(\Dwo S)$ is only $211$.
\end{remark}

%%%%%%%%%%%%%%%%%%%%%%%%%
\subsection{The case of the interval order polytope}

The network $\Dio S = (N,A)$ for the interval order polytope of $S$ is defined as follows:
\begin{eqnarray*}
N & := & \{(X,Y) \st Y \subseteq X \subseteq S\},\\[1mm] 
A & := & \left\{
((X,Y),(Z,T))\in N \times N 
\;\vrule height23pt width1pt depth14pt \; 
\begin{array}{l}
X \subseteq Z,\;
Y \subseteq T,\;\text{and}\\  
\begin{array}{rl}
\text{either~}&|Z| = |X| + 1,\; |T| = |Y|\\ 
\text{or~}&|Z| = |X|,\; |T| = |Y| + 1
\end{array}
\end{array}
\right\}.
\end{eqnarray*}
We choose the source and sink nodes to be $\src := (\varnothing,\varnothing)$ and $\snk := (S,S)$, respectively. 

\begin{lemma}\label{lem_Dio}
The numbers of nodes and arcs in the network $\Dio S$ are
\begin{equation}
|N| = 3^n \quad \textrm{and} \quad |A| = 2 \, n \, 3^{n-1},
\end{equation}
respectively.
\end{lemma}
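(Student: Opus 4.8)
The plan is to count nodes and arcs separately, in each case by encoding the combinatorial objects as ternary labelings of $S$, paralleling the $\{a,b,c\}$-word encoding already used for $\Dwo S$.

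First, for the node count, I would observe that a node $(X,Y)$ with $Y \subseteq X \subseteq S$ is completely determined by, and determines, the assignment to each element $i \in S$ of exactly one of three labels according to whether $i \in Y$, $i \in X \setminus Y$, or $i \in S \setminus X$. Conversely, every labeling of $S$ by three symbols yields a valid node, since any such choice produces sets with $Y \subseteq X \subseteq S$. This is a bijection between $N$ and $\{a,b,c\}^n$, so $|N| = 3^n$.

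Next, for the arc count, I would first unpack the two cases in the definition of $A$ to see that each arc moves exactly one element one notch ``up'' along $S \setminus X \to X \setminus Y \to Y$. In the first case, $Y \subseteq T$ together with $|T| = |Y|$ forces $T = Y$, while $X \subseteq Z$ with $|Z| = |X| + 1$ forces $Z = X \cup \{j\}$ for a unique $j \in S \setminus X$; thus the arc relabels $j$ from $c$ to $b$. In the second case, symmetrically $Z = X$ and $T = Y \cup \{j\}$, where the requirement $T \subseteq Z = X$ forces $j \in X \setminus Y$, so the arc relabels $j$ from $b$ to $a$. In terms of the ternary labeling, an arc changes the label of exactly one element, either $c \to b$ or $b \to a$, and leaves all other labels unchanged.

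Finally, I would count arcs through the data (moved element, type of move, labels of the remaining elements): an arc is specified by the element $j$ whose label changes ($n$ choices), which of the two admissible increments applies ($c \to b$ or $b \to a$, giving $2$ choices), and an arbitrary labeling in $\{a,b,c\}$ of the other $n-1$ elements ($3^{n-1}$ choices). Every such triple yields a genuine arc, because both endpoints are automatically valid nodes and the defining inequalities on cardinalities hold by construction; and the map is clearly injective. Hence $|A| = n \cdot 2 \cdot 3^{n-1} = 2\,n\,3^{n-1}$. The only point needing care is the verification that the two displayed cases are mutually exclusive and together capture exactly the single-step increments, so that no arc is double-counted or missed; once the forcing arguments above pin down $Z$ and $T$ uniquely in each case, this is immediate. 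As a cross-check, the same total follows by summing out-degrees: the out-degree of $(X,Y)$ is $(n - |X|) + (|X| - |Y|) = n - |Y|$, and summing $n - |Y|$ over all nodes counts, for each of the $n$ elements, the $2 \cdot 3^{n-1}$ nodes in which that element is not labeled $a$, again giving $2\,n\,3^{n-1}$.
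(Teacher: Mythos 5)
Your proof is correct and follows essentially the same route as the paper: the node count via the bijection with ternary labelings of $S$ is identical, and your arc count by (moved element, type of move $c\to b$ or $b\to a$, arbitrary labeling of the remaining $n-1$ elements) is exactly the paper's encoding of arcs as words on $\{a,b,c,d,e\}$ with precisely one occurrence of $d$ or $e$, yielding $2\,n\,3^{n-1}$. Your explicit forcing arguments ($T=Y$, $Z=X\cup\{j\}$ in one case, $Z=X$, $j\in X\setminus Y$ in the other) and the out-degree cross-check are welcome extra rigor but do not change the argument.
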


\begin{proof}
Let $S = \{s_1,s_2,\ldots,s_n\}$. Consider a word $\omega = \omega_1\omega_2\cdots\omega_n$ of size $n$ on the alphabet $\{a,b,c\}$. To each such word $\omega$, we associate a pair of sets $(X,Y)$ such that $Y \subseteq X \subseteq S$ by
 letting $Y=\{s_i \in S \st \omega_i = a\}$ and $X=\{s_i \in S \st \omega_i \in \{a,b\}\}$.
We obtain a bijection from the set of words of size $n$ on the alphabet $\{a,b,c\}$ to the node set of the network $\Dio S$. Thus, we have $|N| = 3^n$. 

Now, consider a word $\omega = \omega_1\omega_2\cdots\omega_n$ of size $n$ on the alphabet $\{a,b,c$, $d,e\}$  
containing either exactly one $d$ and no $e$, or exactly one $e$ and no $d$.
We associate an arc $((X,Y),(Z,T))$ of the network $\Dio S$ to each such word, as follows. We let
\begin{eqnarray*}
X & := & \{s_i \in S \st \omega_i \in \{a,b,e\}\},\\
Y & := & \{s_i \in S \st \omega_i = a\},\\
Z & := & \{s_i \in S \st \omega_i \in \{a,b,d,e\}\},\\
T & := & \{s_i \in S \st \omega_i \in \{a,e\}\}.
\end{eqnarray*}
This gives a one-to-one correspondence between the words of size $n$ on the alphabet $\{a,b,c,d,e\}$ 
containing exactly one of the two letters $d$ and $e$,
and the arcs of $\Dio S$. We conclude that $|A| = 2 \, n\,3^{n-1}$. 
\end{proof}

The last result of this section is as follows.

\begin{theorem}
\label{thm: proj_IO}
Let $\pi$ be the projection from $\mathbb{R}^{A}$ to $\mathbb{R}^{S \star S}$, mapping a point $\Phi \in \mathbb{R}^{A}$ to the point $x \in \mathbb{R}^{S \star S}$ given,
 for all $(i,j) \in S \star S$,  by
\begin{equation}
\label{eqn: proj_IO}
x_{(i,j)} := \sum \{\Phi_a \st a = ((X,Y),(Z,T)) \in A,\ i \in Y,\ j \in Z \setminus X\}.
\end{equation}
Then $\pi$ maps the flow polytope of $\Dio S$ onto the interval order polytope of $S$.  Thus, the canonical description of $F(\Dio S)$ is an extended formulation of $\Pio S$ with description size $|A| = 2\,n\,3^{n-1}$.
\end{theorem}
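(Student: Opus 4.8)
The plan is to follow the template of the proof of Theorem~\ref{thm: proj_LO}. Since $\pi$ is a linear map and the flow polytope $F(\Dio S)$ is the convex hull of the characteristic vectors $\chi^{A(P)}$ of its $\src$--$\snk$ paths $P$, we have $\pi(F(\Dio S)) = \conv\{\pi(\chi^{A(P)}) \st P \text{ an } \src\text{--}\snk \text{ path}\}$. Hence it suffices to show that the image under $\pi$ of the set of path vectors is exactly the vertex set $\{x^R \st R \in \mathcal{IO}_S\}$ of $\Pio S$; the two convex hulls then coincide.

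First (the inclusion $\pi(F(\Dio S)) \subseteq \Pio S$), I would start from an arbitrary $\src$--$\snk$ path $P$ and read off an interval order. Along $P$, each arc increases $|X| + |Y|$ by exactly one, so $P$ has $2n$ arcs and records, for each alternative $i$, a distinct step $\ell(i)$ at which $i$ enters the $X$-component and a distinct step $h(i) > \ell(i)$ at which $i$ enters the $Y$-component; these entry times are pairwise distinct and the maps $\ell, h$ satisfy $\ell(i) \le h(i)$, so by Proposition~\ref{pro: interval order} the relation $R$ with $i \, R \, j \iff h(i) < \ell(j)$ is an interval order. The crux is then to evaluate the projection~\eqref{eqn: proj_IO} at $\Phi := \chi^{A(P)}$: the condition $j \in Z \setminus X$ forces the arc to be of the first (``$X$-increment'') type with $Z = X \cup \{j\}$, so it singles out the unique arc of $P$ at which $j$ enters $X$; at that arc $i \in Y$ holds exactly when $i$ has already entered $Y$, i.e. $h(i) < \ell(j)$, i.e. $i \, R \, j$. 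Therefore $x_{(i,j)} = 1$ if $i \, R \, j$ and $0$ otherwise, which shows $\pi(\chi^{A(P)}) = x^R$.

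For the reverse inclusion I would take an arbitrary interval order $R$ and produce a path whose image is $x^R$. By Proposition~\ref{pro: interval order} together with the normalization described earlier (adjusting $\ell$ and $h$ to be one-to-one with disjoint images), the profile $(X_0,Y_0), \ldots, (X_m,Y_m)$ of $R$ is precisely an $\src$--$\snk$ path $P$ in $\Dio S$, and the computation above gives $\pi(\chi^{A(P)}) = x^R$. Combined with the first part, this establishes the equality of vertex sets and hence $\pi(F(\Dio S)) = \Pio S$. The announced description size $|A| = 2\,n\,3^{n-1}$ is then immediate from Lemma~\ref{lem_Dio} and the definition of canonical description.

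I expect the main obstacle to be the bookkeeping in the projection identity rather than any deep issue: one must argue cleanly that the sum in~\eqref{eqn: proj_IO} collapses to a single surviving arc and correctly interpret ``$i \in Y$ at that arc'' as the inequality $h(i) < \ell(j)$. A conceptual point worth flagging — but not an actual obstacle — is that, unlike the linear and weak order cases, the correspondence between paths and interval orders is many-to-one (several paths encode the same $R$); this is harmless here because surjectivity of $\pi$ onto the vertices, not bijectivity, is all that the projection argument requires.
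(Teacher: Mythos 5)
Your proposal is correct and follows essentially the same route as the paper's proof: both reduce the claim to showing that $\pi$ maps the path vectors of $F(\Dio S)$ exactly onto the vertices $x^R$ of $\Pio S$, using the same key collapse of the sum in~\eqref{eqn: proj_IO} to the single arc where $j$ enters the $X$-component, at which $i \in Y$ holds precisely when $h(i) < \ell(j)$, i.e.\ $i\,R\,j$. The only differences are cosmetic: you spell out in full the paper's parenthetical claim that every $\src$--$\snk$ path encodes an interval order (via the $2n$ entry steps), and your closing observation about the many-to-one correspondence between paths and interval orders is exactly the content of Remark~\ref{rmk: many-to-one}.
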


\begin{proof}
The structure of the proof is similar to that of Theorem \ref{thm: proj_LO} (see however Remark~\ref{rmk: many-to-one} below). We show that the image of the vertex set of $F(\Dio S)$ under the projection $\pi$ is exactly the vertex set of $P_{IO}^S$. 

Consider a vertex $v = x^R$ of $P_{IO}^S$ and a vertex $\Phi = \chi^{A(P)}$ of $F(\Dio S)$ such that $\Phi$ encodes the profile of some ordinal representation of the interval order $R$, say by injective mappings $\ell$ and $h$ from $S$ to $\mathbb{R}$ with $\ell(S) \cap h(S) = \varnothing$. It suffices to prove that $\pi(\Phi) = v$ in this case (notice that each vertex of $F(\Dio S)$ comes from some profile of some interval order). 

Let $i$ and $j$ be two distinct alternatives of $S$. 
The right-hand side of \eqref{eqn: proj_LO} must give $Z=X+\{j\}$ and $Y=T$ (by the definition of the arcs in $A$).  In other words, given the profile, the arc $a = ((X,Y),(Z,T))$ is completely determined by $i$ and $j$.  Moreover, such an arc can exist in $A(P)$ only if we have $i \,R\, j$.  Hence, the right-hand side of \eqref{eqn: proj_LO} equals $1$ if and only if $i \,R\, j$ holds (and $0$ otherwise), thus $x_{(i,j)}=v_{(i,j)}$.
\end{proof}

\begin{remark}\label{rmk: many-to-one} 
In the proofs of Theorem~\ref{thm: proj_LO} and \ref{thm: proj_WO}, the projection of the flow polytope on the order polytope induces a bijective mapping between the vertex sets of the two polytopes.  This is not the case here: in general, several vertices of the flow polytope are mapped onto the same vertex of $P_{IO}^S$.  The reason is that different representations of the same interval order can lead to distinct profiles, and thus to different paths in the network.
\end{remark}

\begin{remark}
Recall that, for $n = 4$, the interval order polytope has $191$ facets. This is less than the size of the canonical description of the flow polytope $F(\Dio S)$ with $|S|=4$, which is $216$.   
For larger values of $n$, we do not know the exact number of facets of the interval order polytope.  However, a lower bound on this number follows from \citet[][Theorem~8 and Corollary~1]{Doignon_Rexhep_2016}: the interval order polytope $F(\Dio S)$ has at least as many facets as there are `PC-graphs' on $S$, where a \textsl{PC-graph} is a directed graph in which any node is the tail of at most one arc, and the head of at most one arc (in other words, a PC-graph is a node-disjoint union of paths and cycles plus maybe isolated nodes).  For small numbers of alternatives, we provide in Table~\ref{tab_io} both the resulting lower bound on the size of any linear description of the interval order polytope $P_{IO}^n$, and the exact size of the canonical description of the flow polytope $F(\Dio S)$.
The flow polytope definitely has a smaller description size for $n=6$ (the same could hold for $n=5$).  It is not difficult to see that the same assertion holds for $n\ge7$. 

\begin{table}
\begin{equation*}
\begin{array}{r@{\qquad}r@{\qquad}r}
n & \text{size of minimal } & \text{size of canonical }\\
  & \text{description of } P_{IO}^n & \text{description of } F(\Dio S)\\[2mm]\hline
  3 &          17 &    54 \rule{0pt}{3ex}\\
  4 &         191 &      216\\
  5 &     \ge 759 &  810\\
  6 & \ge 5{,}557 &  2{,}916\\
  7 &\ge 63{,}839 & 10{,}206
\end{array}
\end{equation*}
\caption{Comparison of description sizes in the case of interval orders.\label{tab_io}}
\end{table}
\end{remark}

%%%%%%%%%%%%%%%%%%%%%%%%%%
\subsection{The case of the semiorder polytope}
\label{Semiorder polytopes}

The network $\Dso S = (N,A)$ we build for the semiorder polytope of $S$ is more structured than the one for the interval order polytope of $S$. The nodes of $\Dso S$ are of the form $(X,Y,L)$ where $X$ and $Y$ are subsets of $S$ such that $Y \subseteq X$, and $L$ is a linear order on $X\setminus Y$ 
(recall that when $X\setminus Y= \varnothing$ we have $L=\varnothing$).
Thus,
\begin{eqnarray*}
N &=& \{(X,Y,L) \st Y \subseteq X \subseteq S,\; L\textrm{ linear order on } X\setminus Y\}.
\end{eqnarray*}
We now indicate when the pair $((X,Y,L),(Z,T,M))$ of nodes from $\Dso S$ forms an arc.  
There are two cases that we illustrate in Figure~\ref{fig semiorder intervals} in terms of levels in representations: 
($\alpha$)~the threshold moves from $\thr=\ell(5)$ to $\thr = h(3)$; ($\beta$)~it moves from $\thr=h(3)$ to $\thr = h(4)$. 
Formally, the two cases are: 
\begin{enumerate}
\item[\qquad($\alpha$)]
for some alternative $i$ in $S \setminus X$:
\begin{eqnarray*}
Z &=& X \cup \{i\},\\ 
T &=& Y,\\
M &=& L + i,
\end{eqnarray*}
where $L + i$ means that we append $i$ at the end of the linear order $L$; 
\item[\qquad($\beta$)]
for the alternative $j$ in $X \setminus Y$ which is the first one in $L$:
\begin{eqnarray*}
Z &=& X,\\ 
T &=& Y \cup \{j\},\\
M &=& L - j,
\end{eqnarray*}
where $L - j$ means the linear order induced by $L$ on $Z \setminus T = (X\setminus Y) \setminus \{j\}$.
\end{enumerate}

The sink and source nodes are again $\src := (\varnothing,\varnothing)$ and $\snk := (S,S)$, respectively.

\begin{lemma}
The numbers of nodes and arcs in the network $\Dso S$ are
\begin{eqnarray} 
|N| &=& 
\sum_{t=0}^n\;n\,(n-1)\,\dots(n-t+1)\,2^{n-t},
\label{eq_|N|}
\\ 
|A| &=& 
\sum_{t=0}^n\; n\,(n-1)\,\dots(n-t+1)\,
2^{n-t-1}\,(n+t),
\label{eq_|A|}
\end{eqnarray}
respectively, and they satisfy
\begin{equation}
n! \le |N| \le e^2\,n!,
\qquad 
n! \le |A| \le e^2 \, n \, n!.
\end{equation}
\end{lemma}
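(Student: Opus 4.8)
The plan is to organise both counts around the single parameter $t:=|X\setminus Y|$, the number of ``active'' alternatives recorded in the linear order $L$ at a node $(X,Y,L)$, and then to obtain the bounds by recognising the resulting sums as truncations of the series $e^{2}=\sum_{s\ge0}2^{s}/s!$. First, for the nodes: a node with $|X\setminus Y|=t$ is determined by two independent choices, namely an ordered list of $t$ distinct alternatives, which fixes the set $X\setminus Y$ together with its linear order $L$ (there are $n(n-1)\cdots(n-t+1)$ such lists), and a placement of each of the remaining $n-t$ alternatives into either $Y$ or $S\setminus X$ (there are $2^{n-t}$ such placements). Distinct choices give distinct nodes and every node arises exactly once, so the number of nodes with parameter $t$ is $n(n-1)\cdots(n-t+1)\,2^{n-t}$; summing over $t=0,\dots,n$ gives \eqref{eq_|N|}.

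Next I would count arcs by summing out-degrees. At a node $(X,Y,L)$ the type-$(\alpha)$ arcs are in bijection with $S\setminus X$, one for each alternative that can still enter $X$, so there are $|S\setminus X|$ of them; the type-$(\beta)$ arcs release the $L$-minimum of $X\setminus Y$ into $Y$, so there is exactly one of them precisely when $X\setminus Y\neq\varnothing$. Hence
\[
|A| \;=\; \sum_{(X,Y,L)}|S\setminus X| \;+\; \bigl|\{(X,Y,L)\st X\setminus Y\neq\varnothing\}\bigr| .
\]
I would evaluate the two terms through the same partition over $t$: for fixed $t$ the $n-t$ inactive alternatives are split between $Y$ and $S\setminus X$, so the first sum reduces, for each $t$, by $\sum_{k}k\binom{n-t}{k}=(n-t)2^{n-t-1}$, while the second term equals $|N|-2^{n}$ because the nodes with $X\setminus Y=\varnothing$ are precisely the $2^{n}$ nodes at level $t=0$. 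Regrouping by level $t$ then delivers the arc count \eqref{eq_|A|}.

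For the bounds I would substitute $s:=n-t$, using $n(n-1)\cdots(n-t+1)=n!/(n-t)!=n!/s!$. Equation~\eqref{eq_|N|} then becomes
\[
|N| \;=\; n!\sum_{s=0}^{n}\frac{2^{s}}{s!},
\]
a partial sum of $e^{2}$: it is at least its $s=0$ term $1$ and at most the full series $e^{2}$, giving $n!\le|N|\le e^{2}\,n!$ at once. The same substitution turns \eqref{eq_|A|} into
\[
|A| \;=\; n!\sum_{s=0}^{n}\frac{2^{s-1}(2n-s)}{s!}.
\]
Since $2n-s\le 2n$, each term is at most $n\,(2^{s}/s!)$, so the sum is at most $n\,e^{2}$ and hence $|A|\le e^{2}\,n\,n!$; and the sum is at least its $s=0$ term $n$, so $|A|\ge n\cdot n!\ge n!$.

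The binomial sums and the final simplification are routine; the one genuinely delicate point is the type-$(\beta)$ out-degree, namely that at each node only the $L$-minimum of $X\setminus Y$ may pass into $Y$. This is exactly what distinguishes $\Dso S$ from the interval-order network and reflects the nested-interval characterisation of Proposition~\ref{pro: no included interval}: intervals must close in the order in which they open. Once this out-degree is pinned down, the summations and the passage to the bounds via the exponential series are immediate.
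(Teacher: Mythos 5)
Your count of $|N|$ and your exponential-series bounds follow exactly the paper's argument (the paper gets the upper bound on $|A|$ from the cruder observation $|A|\le n\,|N|$ rather than your termwise estimate, an immaterial difference). The genuine gap is in the arc count: the out-degree you use and the formula \eqref{eq_|A|} you claim to recover from it are arithmetically incompatible. With your reading of rule ($\beta$) --- at most one $\beta$-arc per node, available exactly when $X\setminus Y\neq\varnothing$ --- your two sums evaluate, level by level, to
\begin{equation*}
\sum_{t=0}^{n} n(n-1)\cdots(n-t+1)\,(n-t)\,2^{n-t-1} \;+\; \sum_{t=1}^{n} n(n-1)\cdots(n-t+1)\,2^{n-t},
\end{equation*}
whose $t$-th term for $t\ge1$ is $n(n-1)\cdots(n-t+1)\,2^{n-t-1}\,(n-t+2)$, not $2^{n-t-1}(n+t)$ as in \eqref{eq_|A|}; the two agree only for $t\le 1$. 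Concretely, for $n=2$ your sums give $4+6+2=12$, and a direct enumeration of $\Dso S$ as defined confirms $12$ (ten nodes, six $\alpha$-arcs, and one $\beta$-arc out of each of the six nodes with $X\setminus Y\neq\varnothing$), whereas \eqref{eq_|A|} gives $4+6+4=14$. So the step ``regrouping by level $t$ then delivers the arc count \eqref{eq_|A|}'' is false as a matter of algebra.

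The source of the mismatch is instructive. The paper's own proof asserts that the out-degree of $(X,Y,L)$ equals $n-|Y| = |S\setminus X| + |X\setminus Y|$, i.e.\ it counts one $\beta$-arc for \emph{every} element of $X\setminus Y$, and it is this count that integrates to $\sum_{\ell}\binom{n-t}{\ell}(n-\ell) = 2^{n-t-1}(n+t)$ and hence to \eqref{eq_|A|}. Your stricter out-degree is the faithful reading of the printed rule ($\beta$) --- and, as you rightly stress, that restriction is exactly what keeps the $\src$--$\snk$ paths inside the semiorders --- but it yields a strictly smaller arc count for every $n\ge 2$. As written, your proposal simultaneously asserts the restrictive out-degree and the conclusion \eqref{eq_|A|}, which cannot both hold; this is a real gap, not a presentational slip. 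To obtain a coherent proof you must choose: either count as the paper's proof does (out-degree $n-|Y|$), which yields \eqref{eq_|A|} but does not match the arc set as defined in the paper, or keep your count and replace \eqref{eq_|A|} by the corrected expression displayed above. Note that the asymptotic conclusion $n!\le|A|\le e^2\,n\,n!$ survives under either count: for the corrected expression the $t=n$ term equals $n!$, and the out-degree is at most $n$, so $|A|\le n\,|N|\le e^2\,n\,n!$ still holds.
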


\begin{proof}
To obtain a node $(X,Y,L)$ of $\Dso S$ with $t:=|X \setminus Y|$, we select first an ordered list of $t$ elements from $S$. Next we select some subset $Y$ among the $n-t$ remaining elements, and finally 
we set $X$ equal to $Y$ augmented by the listed elements.

To count the arcs, notice that the number of arcs leaving a node $(X,Y,L)$ equals $n-|Y|$.  Assume again $t=|X \setminus Y|$, and let $\ell:=|Y|$.  Then 
\begin{equation}
|A|=\sum_{t=0}^n\; \left(n\,(n-1)\,\dots(n-t+1)\,
\sum_{\ell=0}^{n-t} \binom{n-t}{\ell}\,(n-l) \right).
\end{equation}
Now
\begin{equation}
\sum_{\ell=0}^{n-t} \binom{n-t}{\ell}\,(n-l) = n\,2^{n-t}-(n-t)\,2^{n-t-1}
= 2^{n-t-1}\,(n+t).
\end{equation}
The first bound holds because the last term in Equation~\eqref{eq_|N|} is $n!$.  On the other hand, rewriting Equation~\eqref{eq_|N|} as
\begin{equation}
|N| \quad=\quad n! \; \sum_{t=0}^n \; \frac{2^{n-t}}{(n-t)!},
\end{equation}
we get $|N|\le n! \, e^2$. 
Because the number of arcs leaving any node is at most $n$, the upper bound on $|A|$ follows from the one on $|N|$.
\end{proof}

\begin{theorem}
\label{thm: proj_SO}
Let $\pi$ be the projection from $\mathbb{R}^{A}$ to $\mathbb{R}^{S \star S}$, mapping a point $\Phi \in \mathbb{R}^{A}$ to the point $x \in \mathbb{R}^{S \star S}$ given for all $i$, $j$ in $S$ by 
\begin{equation}
\label{eqn: proj_SO}
x_{(i,j)} := \sum \{\Phi_a \st a = ((X,Y,L),(Z,T,M)) \in A,\ i \in Y,\ j \in Z \setminus X\}.
\end{equation}
Then $\pi$ maps the flow polytope of $\Dso S$ to the semiorder polytope of $S$. Thus, the flow polytope
$F(\Dso S)$ is an extended formulation of the semiorder polytope $\Pso S$,
with size $|A| \le e^2 \, n \, n!$.
\end{theorem}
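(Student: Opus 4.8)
The plan is to follow the pattern of the proofs of Theorems~\ref{thm: proj_LO}--\ref{thm: proj_IO}. Since $\pi$ is an affine map, $\pi(F(\Dso S))$ is the convex hull of the images of the vertices of $F(\Dso S)$, so it suffices to prove that the image under $\pi$ of the vertex set of $F(\Dso S)$ is exactly the vertex set of $\Pso S$. As in the interval order case, this correspondence will be many-to-one (cf.\ Remark~\ref{rmk: many-to-one}), since distinct representations of the same semiorder produce distinct profiles and hence distinct paths. I note at the outset that the projection formula~\eqref{eqn: proj_SO} is formally identical to~\eqref{eqn: proj_IO}: it ignores the linear-order components $L$, $M$, and a term $\Phi_a$ can be nonzero in the sum only when $Z \setminus X = \{j\}$, which forces the arc $a$ to be of type~($\alpha$). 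Consequently, once we know that a given $\src$--$\snk$ path encodes a semiorder $R$, the computation $\pi(\chi^{A(P)}) = x^R$ is carried out exactly as in the proof of Theorem~\ref{thm: proj_IO}. The only genuinely new content, therefore, is to identify precisely which relations the paths of $\Dso S$ encode, namely the semiorders, and to check that every semiorder arises.

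For the forward inclusion, I would start from a vertex $\Phi = \chi^{A(P)}$ of $F(\Dso S)$, where $P$ is an $\src$--$\snk$ path, and read off its sequence of nodes $(X_k,Y_k,L_k)$. Since $X$ and $Y$ grow monotonically from $\varnothing$ to $S$, each alternative enters $X$ exactly once (at a type~($\alpha$) arc) and enters $Y$ exactly once (at a type~($\beta$) arc). The key observation is that the linear order $L_k$ behaves as a FIFO queue: a type~($\alpha$) arc appends the incoming element at the end of $L$, while a type~($\beta$) arc removes the element at the front of $L$. I would then reconstruct a representation $(\ell,h)$ by letting $\ell(j)$ record the step at which $j$ enters $X$ and $h(i)$ record the step at which $i$ enters $Y$. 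The first two conditions of Proposition~\ref{pro: no included interval} are immediate, and the third, no-included-interval condition $\ell(i)<\ell(j)\implies h(i)\le h(j)$ follows directly from the queue discipline: elements leave $X\setminus Y$ in the same order in which they entered $X$. By Proposition~\ref{pro: no included interval} the encoded relation $R$ is a semiorder, and $\pi(\Phi)=x^R$ then follows as noted above.

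For the reverse inclusion, I would take an arbitrary semiorder $R$ and, by Proposition~\ref{pro: no included interval}, a representation $(\ell,h)$ satisfying the no-included-interval property; after the usual perturbation making $\ell$, $h$ one-to-one with $\ell(S)\cap h(S)=\varnothing$, I sweep the level $\thr$ and form the associated sequence of nodes of $\Dso S$ as in Subsection~\ref{Semiorder polytopes}. It remains to verify that consecutive nodes are joined by valid arcs. When $\thr$ crosses some $\ell(i)$, the element $i$ has the largest $\ell$-value currently in $X\setminus Y$, so appending it at the end of $L$ yields a legitimate type~($\alpha$) arc. When $\thr$ crosses some $h(j)$, I must check that $j$ is the \emph{first} element of $L$; this is the crucial step and is exactly where the semiorder hypothesis enters. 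Indeed, if some $k\in X\setminus Y$ had $\ell(k)<\ell(j)$, the no-included-interval property would give $h(k)\le h(j)<\thr$, forcing $k\in Y$, a contradiction. Hence $j$ is the front of the queue and the transition is a legitimate type~($\beta$) arc. The resulting $\src$--$\snk$ path $P$ then satisfies $\pi(\chi^{A(P)})=x^R$.

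Combining the two inclusions shows that $\pi$ maps $F(\Dso S)$ onto $\Pso S$, so $F(\Dso S)$ is an extended formulation of $\Pso S$; its canonical description has size $|A|$, and the bound $|A|\le e^2\,n\,n!$ is the content of the preceding lemma. I expect the main obstacle to be the equivalence between the FIFO queue discipline on the linear orders $L$ and the no-included-interval characterization of semiorders: this is the single idea that makes both inclusions work, and the reverse direction's verification that the element crossing $h(j)$ is always at the front of $L$ is the place where it must be argued carefully.
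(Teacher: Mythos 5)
Your proof is correct and takes essentially the same route as the paper's (which is only a sketch deferring to the interval-order case): $\src$--$\snk$ paths of $\Dso S$ correspond to representations satisfying the no-included-interval property of Proposition~\ref{pro: no included interval}, and the projection computation reduces to that of Theorem~\ref{thm: proj_IO} since only type~($\alpha$) arcs contribute. Your FIFO-queue observation for the forward direction and the front-of-queue verification (via the no-inclusion property) for the reverse direction correctly supply the details that the paper leaves implicit.
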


\begin{proof}
Once again, we show that $\pi$ maps the set of vertices of the flow polytope $F(\Dso S)$ onto the set of vertices of the order polytope, here the semiorder polytope $\Pso S$ of $S$.  Each $\src$--$\snk$ path in $\Dso S$ produces a representation of some interval order $R$, exactly as in the proof of Theorem~\ref{thm: proj_IO}.  Next, taking into account the ordering $L$ in any network node $(X,Y,L)$, we see that the representation thus obtained has no interval strictly including another one.  By Theorem~\ref{eqn: no embedded intervals}, $R$ is then a semiorder.  The rest of the proof is similar to the proof of Theorem~\ref{thm: proj_IO}.
\end{proof}

\begin{remark}
Recall that, for $n = 4$, the semiorder polytope has $563$ facets.  
The description size of the flow polytope $F(\Dso S)$ with $|S|=4$ is $520$, which is less than the number of facets of the semiorder polytope.
It is difficult to prove that this holds for all values of $n \ge 4$, because only a very limited number of facets of semiorder polytopes are known.
However we can prove that there exists a natural number $n_0$ such that, for all $n > n_0$, the description size of the flow polytope $F(\Dso S)$ will be smaller than the number of facets of the polytope $\Pso S$.
This is due to the fact that there are more facet defining inequalities for $\Pso n$ than PC-graphs on $n$ elements \cite[][Section~9]{Doignon_Rexhep_2016}, and more PC-graphs than \textsl{chain gangs}, the latter being the disjoint unions of paths plus maybe isolated nodes.  Indeed, the number of chain gangs on $n$ elements equals $n!\,\sum_{j=0}^{n-1}\, \binom{n-1}j\,/\, (j+1)!$ \cite[][Sequence~A000262]{Sloane_OEIS}, and so it becomes eventually larger than $e^2\,n\,n!$ for some $n=n_0$, and remains so for all $n$ larger than $n_0$\footnote{Computations suggest this happens with $n_0=21$.}.
Even if its description size happens to be larger for values of $n$ less than $n_0$, our extended formulation has always the  advantage that its canonical description is available in a very simple form.  
\end{remark}

%%%%%%%%%%%%%%%%%%%%%%%%%%%%%%%%%%%%%%%%%%%%%%%%%%%%%%%%%%%%%%%%%%%%%%
\section{Interface between Flow Polytopes and Statistics}

In this section, we briefly consider how our network flow polytopes interface with contemporary statistical methods for evaluating  random preference models.  
Typical choice data used in evaluating random preference models are comprised of decision maker responses to a series of repeated paired presentations of choice alternatives \citep[see][]{Regenwetter_Dana_Stober_2011a}.  
We consider the case when pairs of options are offered and the decision maker must choose one (two-alternatives forced choice) or must either choose one or indicate indifference (ternary paired comparisons). 
Let $p_{(i,j)}$ and $C_{(i,j)}$ denote the probability and the number of times that a decision maker chooses alternative $j$ when offered  alternatives $i$ and $j$.   
Let $\boldsymbol{p} := \left(p_{(i,j)}\right)_{(i,j) \in S \star S}$ and let $\boldsymbol{C} := \left(C_{(i,j)}\right)_{(i,j) \in S \star S}$.    Let $L(\boldsymbol{p} | \boldsymbol{C})$ 
denote the likelihood of any given data $\boldsymbol{C}$ as a function of probabilities $\boldsymbol{p}$,  
\citep[for examples involving multinomial distributions see e.g.,][]{Davis_Stober_2009,Myung_Karabatsos_Iverson_2005, Regenwetter_Dana_Stober_2011a}.  
In general, in a random preference model, the maximum likelihood estimate does not have a closed form solution and therefore requires convex optimization.
The maximum likelihood estimate (MLE) for a given random preference model $P_{\mathcal M}^S$  that is one of $\Plo S$,  $\Pwo S$,  $\Pso S$, or  $\Pio S$, given choice data $\boldsymbol{C}$,  equals 
\begin{equation}
\hat{\boldsymbol{p}}_{\boldsymbol{\mathcal{M}}}^{\boldsymbol{C}} := \argmax_{ \boldsymbol{p} \in P_{\mathcal M}^S } L(\boldsymbol{p} | \boldsymbol{C}).
\end{equation}
In a network flow representation, the MLE for $P_{\mathcal M}^S$ can be calculated via the following optimization program:
\begin{equation}
\begin{array}{rrl}
\textrm{(MLE-CP)} &\min_{\boldsymbol{p}\in \mathbb{R}^{S \star S}, \Phi \in \mathbb{R}^{|A|}} & - \ln L(\boldsymbol{p} | \boldsymbol{C})\\
         &\textrm{s.t.} &\Phi \in F(D_{\mathcal M}^S)\\
        &              &\boldsymbol{p} = \pi(\Phi),
\end{array}
\end{equation}
where $D_{\mathcal M}^S$ is one of  $\Dlo S$,  $\Dwo S$,  $\Dso S$, or  $\Dio S$, accordingly. 
This is a convex optimization problem because we minimize the opposite of the log-likelihood function, assumed to be convex, over a convex feasible region.  Indeed, the two vectors of variables $\boldsymbol{p}$ and $\boldsymbol{\Phi}$ are only constrained by linear equations and inequalities, coming both from the canonical description \eqref{eqn: flow polytope} of the flow polytope $F(D_{\mathcal M}^S)$ and the expression that $\boldsymbol{p}$ is the projection of $\boldsymbol{\Phi}$ by $\pi$ (i.e. its image by an affine mapping).

The solution of this convex program yields the MLE as well as the maximized likelihood value. This convex program can be solved computationally by standard methods in convex optimization such as polynomial-time interior-point methods, see \cite{Nesterov_Nemirovskii_1994}. 
Alternatively, the vector of variables $\boldsymbol{p}$ can be eliminated from the formulation using the projection equalities, leading to
\begin{equation}
\begin{array}{rrl}
\textrm{(MLE-CP')} &\min_{\Phi \in \mathbb{R}^{|A|}} & - \ln L(\pi(\Phi) | \boldsymbol{C})\\
         &\textrm{s.t.} &\Phi \in F(D_{\mathcal M}^S).    
\end{array}
\end{equation}
This is now a convex nonlinear network optimization problem, see e.g.~\cite{Bertsekas_1998} 
(note that while the objective function remains convex, it is no longer separable). 

The maximum likelihood estimate $\hat{\boldsymbol{p}}_{\boldsymbol{\mathcal{M}}}^{\boldsymbol{C}}$ is obtained by projecting the optimum flow $\hat{\Phi}$ (which is not necessarily unique).

The Bayes factor is a standard method for assessing the relative empirical evidence for/against either of two competing models.
It is defined as the ratio of their respective marginal likelihoods \citep{kass1995bayes}.  Following previous approaches in testing random preference models \citep{davis2015individual}, we compare the model $P_{\mathcal M}^S$ to an ``encompassing''  model that places no restrictions on choice probabilities. Assuming suitably chosen prior distributions, 
the Bayes factor can be re-written as the ratio of two proportions: 
the proportion of the encompassing prior in agreement with $P_{\mathcal M}^S$ and the proportion of the encompassing posterior  in agreement with $P_{\mathcal M}^S$
\citep[see][]{klugkist2007bayes}.  We estimate these proportions by  repeatedly drawing from the prior and posterior distributions and counting the samples that satisfy $P_{\mathcal M}^S$.  
Checking whether or not a sampled value of $\boldsymbol{p}$  satisfies a given random preference model, using the extended formulation via a network flow, reduces to solving the following convex optimization program:

\begin{equation}
\begin{array}{rrl}
\textrm{(Bayes-CP)} &\min & \| \boldsymbol{p} - \boldsymbol{p}_{s}\|\\
         &\textrm{s.t.} &\Phi \in F(D_{\mathcal M}^S)\\
        &              &\boldsymbol{p} = \pi(\Phi),
\end{array}
\end{equation}
where $D_{\mathcal M}^S$ is one of  $\Dlo S$,  $\Dwo S$,  $\Dso S$, or  $\Dio S$, which can be solved in the same ways as (MLE-CP). If the optimal value of $\boldsymbol{p}$ equals $\boldsymbol{p}_{s}$ then the sampled point lies inside the 
model, otherwise it does not.  See \cite{davis2015individual} for additional details on using sampling methods to calculate Bayes factors under the weak order polytope.

%%%%%%%%%%%%%%%%%%%%%%%%%%%%%%%%%%%%%%%%%%%%%%%%%%%%%%%%%%%%%%%%%%%%%%
\section{Optimality} \label{sec:optimality}

Here we discuss the asymptotic optimality of our extended formulations in terms of size. It will be convenient to use the notion of \textsl{extension complexity} of a polytope $P$, defined as the minimum size of an extended formulation of $P$. Notice that the extension complexity is affinely invariant, that is, two affinely equivalent polytopes $P$ and $Q$ have the same extension complexity, and that it is monotone in the sense that the extension complexity of a polytope $P$ is at least that of any of its faces $F$.

\cite{Maksimenko2017} recently proved that the \textsl{correlation polytope}
\begin{equation}
\Pcor{n} := \conv \{xx^\intercal \mid x \in \{0,1\}^n\}
\end{equation} 
is affinely equivalent to a face of the linear ordering polytope $\Plo{2n}$
(the linear ordering polytope for a set of $2n$ alternatives). This implies that the extension complexity of $\Plo{2n}$ is at least that of $\Pcor{n}$, which in turn was proved to be at least $(3/2)^n$, see \cite{FMPTW15jour} and \cite{KW14}. Therefore, the extension complexity of $\Plo{n}$ is at least $(3/2)^{\lfloor n/2 \rfloor}$. Moreover, since $\Pwo{n}$, $\Pio{n}$ and $\Pso{n}$ all have $\Plo{n}$ as a face, we see that their extension complexities are at least that of $\Plo{n}$.
Since the extended formulations constructed in this paper for $\Plo{n}$, $\Pwo{n}$ and $\Pio{n}$ have size $2^{\Theta(n)}$, this implies that the three corresponding extension complexities are also $2^{\Theta(n)}$, i.e.\ that these extended formulations are asymptotically optimal.

For the semiorder polytope, we only know that the extension complexity of $\Pso{n}$ is $2^{\Omega(n)}$ and at the same time $2^{O(n \log n)}$. We remark that it is possible that the semiorder polytope admits an extended formulation with size $2^{\Theta(n)}$ but no such extended formulation based on flows. Indeed, flow-based extended formulations are known to have strong limitations, see~\cite{Fiorini_Pashkovich2015}.

%%%%%%%%%%%%%%%%%%%%%%%%%%%%%%%%%%%%%%%%%%%%%%%%%%%%%%%%%%%%%%%%%%%%%%
\section{Conclusions and Discussion}

In this paper, we leverage extended formulations and network flow polytopes to work with random preference models for linear orders, weak orders, semiorders, and interval orders. 
Our results break through previous barriers in the number of choice alternatives for which parsimonious linear descriptions are known.
One fundamental reason is that we provide a complete, linear description of the extended formulation (a polytope which projects on the polytope of choice probabilities permitted by the model).  A second reason is that in many cases the extended formulation description is of smaller size than that of the initial polytope.  However, in the case of semiorders, the extended formulation relies more heavily on the numerical representations of the relations 
and consequently entails an excessively large description.  
One interesting open question is to find a more parsimonious extended formulation in the case of semiorders, or prove that none exists.

A natural application of the extended formulations we have described is to expand empirical studies of  fundamental structures (e.g., weakly ordered preferences) for  larger numbers of choice alternatives than previously possible \citep{Regenwetter_Stober_2012}.  
We leave this for future work.

\vskip4mm

\textsc{Acknowledgments.}
This material is based upon work supported 
by the Air Force Office of Scientific Research under Award Nr.~FA9550-05-1-0356 (M.~Regenwetter, PI), 
by the  National Institutes of Health under Award K25AA024182 (C.~Davis-Stober, PI), 
by the National Institute of Mental Health under Award Nr.~PHS 2 T32 MH014257 (M.~Regenwetter, PI), 
by the National Science Foundation under awards SES-1062045, 
SES-1459699 (M.~Regenwetter, PI), and SES-1459866 (C.~Davis-Stober, PI). Jean-Paul Doignon and Samuel Fiorini were supported by \textsl{Action de Recherche Concert\'ee} grants of the \textsl{Communaut\'e fran\c{c}aise de Belgique (Belgium)}. Samuel Fiorini was also supported by ERC Consolidator Grant 615640-ForEFront. Any opinions, findings, and conclusions or recommendations expressed in this publication are those of the authors and do not necessarily reflect the views of their funding agencies or universities. 

% \addcontentsline{toc}{section}{References}
\section{References}
%\bibliographystyle{model5-names} 
%\bibliography{flows}

\end{document}